\DeclareMathOperator{\spn}{span}
 \newcommand{\R}{\mathbb R}\newcommand{\A}{\mathbf a}\newcommand{\B}{\mathbf b}
\newcommand{\rec}{\mathcal{R}} \newcommand{\Z}{\mathbb Z}
\newcommand{\ep}{\varepsilon}
\DeclareMathOperator{\Int}{Int}
\DeclareMathOperator{\cl}{cl}
\newcommand{\ds}{\displaystyle}
\newcommand{\crt}{r_\ep}
\newcommand{\mt}{m_\ep}
\newcommand{\nspan}{N_{\spn}}
\DeclareMathOperator{\diam}{diam}
\newcommand{\lip}{c}
\theoremstyle{plain} \newtheorem{thm}{Theorem}
\newtheorem{cor}[thm]{Corollary} \newtheorem{prop}[thm]{Proposition}
\newtheorem{lemma}[thm]{Lemma}
\theoremstyle{definition} \newtheorem{defn}[thm]{Definition}
\newtheorem{ex}[thm]{Example} 
\theoremstyle{remark} \newtheorem{remark}[thm]{Remark}
\begin{document}

\title{Chain recurrence rates and topological entropy}

\author{David Richeson}   \author{Jim Wiseman} \address{Dickinson
College\\ Carlisle, PA 17013} \email{richesod@dickinson.edu} 
\address{Agnes Scott College \\ Decatur, GA 30030}
\email{jwiseman@agnesscott.edu}

\begin{abstract}
We investigate the properties of chain recurrent, chain transitive, and chain mixing maps (generalizations of the well-known notions of non-wandering, topologically transitive, and topologically mixing maps).  We describe the structure of chain transitive maps.  These notions of recurrence are defined using $\ep$-chains, and the minimal lengths of these $\ep$-chains give a way to measure recurrence time (chain recurrence and chain mixing times). We give upper and lower bounds for these recurrence times and relate the chain mixing time to topological entropy.
\end{abstract}

\maketitle

\section{Introduction}Pseudo-orbits, or $\ep$-chains, are important tools for investigating properties of discrete dynamical systems.  These $\ep$-chains detect recurrent and mixing behaviors that may not be evident by studying actual orbits.  However, what is missing from the definitions of terms such as chain recurrence, chain transitivity, and chain mixing is any information about the lengths of the recurrence or mixing times.  They also do not give fine detail about the ``dynamics'' of the $\ep$-chains.

Let $X$ be a compact metric space and $f:X\to X$ be a continuous map.  The map $f$ is chain recurrent if for every $\ep>0$ and every point $x$, there is an $\ep$-chain from $x$ to itself (see section~\ref{sec:defns} for definitions).  The chain recurrence time (that is, the length of the shortest such chain) depends on $\ep$.  Similarly, if $f$ is chain mixing, then the chain mixing time also depends on $\ep$.  We discuss the properties and relationship of these two quantities, and in particular relate the chain mixing time to topological entropy.

We also study the structure of chain transitive maps, which resembles that of the symbolic dynamics determined by irreducible graphs and is rather different from that of topologically transitive maps.  For example, an irrational rotation of a circle is topologically transitive (actually minimal), but certainly not topologically mixing.  However, as we will see (Cor.~\ref{cor:connected}), a map on a connected space is chain mixing if and only if it is chain transitive (and, in fact, if and only if it is chain recurrent).

The paper is organized as follows.  In section~\ref{sec:defns} we give definitions.  In section~\ref{sec:structure} we discuss the structure of chain transitive maps.  In section~\ref{sec:estimates} we explore the properties of the chain recurrence and mixing times, and in section~\ref{sec:entropy} we relate the chain mixing time to topological entropy.  In section~\ref{sec:examples} we study examples.

\section{Definitions}
\label{sec:defns}

Let $X$ be a compact metric space. (To avoid technical difficulties later, we assume that $X$ is not a single point.) Throughout the paper, let $f:X\to X$ be a continuous map.  An \emph{($\ep,f$)-chain (or ($\ep,f$)-pseudo-orbit)} from $x$ to $y$ is a sequence $(x=x_0, x_1, \dots, x_n=y)$ such that $d(f(x_{i-1}),x_i)\le\ep$ for $i=1,\dots,n$.   To simplify notation, in most cases we write simply ``$\ep$-chain'' without referring explicitly to the map $f$. We define the \emph{length} of the $\ep$-chain $(x_0, x_1, \dots, x_n)$ to be $n$.

There are a variety of ways to use $\ep$-chains to describe recurrence.  In this paper we investigate three such notions---chain recurrence, chain transitivity, and chain mixing.  They generalize the well-known topological notions of recurrence of non-wandering, topological transitivity, and topological mixing.   

For the sake of comparison, let us recall the definitions of these topological notions of recurrence.  A point $x\in X$ is \emph{non-wandering} if for any open neighborhood $U\subset X$ of $x$, there exists $n>0$ such that $f^{n}(U)\cap U\ne \emptyset$.  The map $f$ is non-wandering if every point of $X$ is non-wandering. A map $f$ is \emph{topologically transitive} if for any nonempty open sets $U, V\subset X$, there is an $n\ge 0$ such that $f^{n}(U)\cap V\ne\emptyset$.   A map $f$ is \emph{topologically mixing} if for any nonempty open sets $U,V\subset X$ there is an $N>0$ such that $f^n(U)\cap V\ne\emptyset$ for all $n\ge N$.

\emph{Chain recurrence.} A point $x$ is \emph{chain recurrent} if for every $\ep>0$, there is an $\ep$-chain from $x$ to itself.  The map $f$ is \emph{chain recurrent} if every point of $X$ is chain recurrent.

If $x$ is a chain recurrent point, define the \emph{$\ep$-chain recurrence time} $\crt(x,f)$ to be the smallest $n$ such that there is an $\ep$-chain of length $n$ from $x$ to itself.  If $f$ is chain recurrent, define $\crt(f)$ to be the maximum over all $x$ of $\crt(x,f)$.  (To see that this maximum exists, observe that if there is an $\frac\ep2$-chain of length $n$ from $x$ to itself, then there is a neighborhood $U$ of $x$ such that for all $y\in U$, there is an $\ep$-chain of length $n$ from $y$ to itself.  Then the compactness of $X$ gives an upper bound on $\crt$.)  For the sake of brevity, we usually write ``$\crt(x)$'' and ``$\crt$.''

\emph{Chain transitivity.}  A map $f$ is \emph{chain transitive} if for every $x,y\in X$ and every $\ep>0$, there is an $\ep$-chain from $x$ to $y$.  Following \cite{GF} we say that a map $f$ is \emph{totally chain transitive} if $f^n$ is chain transitive for all $n\ge0$.

\emph{Chain mixing.} 
By compactness, a map $f$ is topologically mixing if and only if for any $\ep,\delta>0$, there is an $N$ such that for any $x,y\in X$ and any $n\ge N$, there are points $x'\in B_\delta(x)$ and $y'\in B_\ep(y)$ with $f^n(x')=y'$.  To mimic this, we say that
$f$ is \emph{$\ep$-chain mixing} if there is an $N>0$ such that for any $x,y\in X$ and any $n\ge N$, there is an $\ep$-chain from $x$ to $y$ of length exactly $n$. The map $f$ is \emph{chain mixing} if it is $\ep$-chain mixing for every $\ep>0$.  
If $0<\ep<\delta$ and $x\in X$, define the \emph{chain mixing time} $\mt(x,\delta,f)$ to be the smallest $N$ such that for any $n\ge N$ and any $y\in X$, there is an $\ep$-chain of length exactly $n$ from some point in $B_\delta(x)$ to $y$.  (In other words, $\mt(x,\delta,f)$ is the smallest $N$ such that $(B_\ep\circ f)^N(B_\delta(x))=X$.)  We define $\mt(\delta,f)$ to be the maximum over all $x$ of $\mt(x,\delta,f)$.  (This maximum exists by compactness.)  Again, to simplify notation, in most cases we write ``$\mt(x,\delta)$'' and ``$\mt(\delta)$.''

If $f$ is a physical system, then $\delta$ represents the uncertainty in the measurement of the initial state of the system, and $\ep$ the possible error in evaluating the map at each step.  The chain mixing time $\mt(\delta)$ is the time after which all information about the initial state of the system is lost, since any point could be anywhere by then.

We are indebted to the referee for some extremely helpful comments and ideas.

\section{Structure of chain transitive maps}
\label{sec:structure}



In this section we investigate the structure of chain transitive dynamical systems.  To the best of our knowledge, the main result about this structure, Theorem~\ref{thm:structure}, appears only in the exercises of \cite{A}.  Thus a self-contained discussion of it and its consequences may be useful.

We show that for a fixed $\ep>0$, the space $X$ decomposes into finitely many subsets; $f$ permutes these $n$ subsets cyclically and $f^{n}$ is $\ep$-chain mixing on each of them.  At one extreme, if $\ep$ is greater than the diameter of $X$, $f$ is $\ep$-chain mixing on $X$ itself.  As $\ep$ goes to zero, the subsets divide repeatedly, either finitely or infinitely many times.

For a fixed $\ep>0$, the dynamical structure of $\ep$-chains for a chain transitive map is very similar to that of the symbolic dynamics determined by a finite irreducible graph.  (See \cite[\S4.5]{LM}, from which we borrow some terminology.)  As $\ep$ goes to zero, the size of the graph may grow, possibly without bound.

Let us consider two elementary examples.

\begin{ex}
Let $X=S^1$ (considered as $\R/\Z$).  The doubling map $x\mapsto 2x$ is chain mixing.  
\end{ex}

\begin{ex}\label{ex:doublecircle}
Let $X$ be the disjoint union of two circles and $f$ be the map sending a point $x$ to the point $2x$ in the other circle.  The map $f$ is chain transitive but not chain mixing, because it is not $\ep$-chain mixing for any $\ep$ smaller than the distance between the two circles.  However, $f^2$ restricted to one circle \emph{is} chain mixing.
\end{ex}

We should not be misled by these examples;  they are not prototypes for all chain transitive maps. As $\ep$ decreases the number of permuted subsets may go to infinity.  As we see in the following example, it is possible that no power of $f$ has associated invariant subsets on which it is chain mixing. 

\begin{ex}
Let $X$ be the Cantor set of sequences $\A=(a_1, a_2,\ldots)$, where each $a_i\in \left\{ 0,1\right\}$.  Give $X$ the metric $d(\A,\B)=  \sum_{i=1}^\infty \delta(a_i,b_i)/2^i$, where $\delta(a_i,b_i)=0$ if $a_i=b_i$ and 1 otherwise.  Define  $f:X\to X$ by adding $(1,0,0,0,\ldots)$, where addition is defined componentwise base 2, with carrying to the right.  (For example, $f(1,0,0,0,\ldots)= (1,0,0,0,\ldots) + (1,0,0,0,\ldots) = (0,1,0,0,\ldots)$, and $f(1,1,1,0,0,\ldots)=(0,0,0,1,0,\ldots)$.)  The map $f$, which is minimal, is called an \emph{adding machine} or \emph{odometer map}.

If we look only at the first digit, then $f$ has period two.  On the first two digits, $f$ has period four ($(0,0) \mapsto (1,0) \mapsto (0,1) \mapsto (1,1) \mapsto (0,0)$), and so on.   Thus $f$ is $\ep$-chain mixing for $\ep\ge\frac{1}{2}$, but not for $\ep<\frac{1}{2}$.  However, when $\frac14\le\ep<\frac12$, $f^2$ restricted to $\{(0,a_2,a_3,\ldots)\}$ or $\{(1,a_2,a_3,\ldots)\}$ is $\ep$-chain mixing, and in general, for $\frac1{2^{k}}\le\ep<\frac1{2^{k-1}}$,  $X$ decomposes into $2^{k}$ disjoint sets on which $f^{2^k}$ is $\ep$-chain mixing, but $f^{2^{k-1}}$ is not.
\end{ex}

In fact, this is essentially the only example of a chain transitive map $f$ such that no $f^k$ is a disjoint union of finitely many chain mixing maps, in the sense that any such map factors onto an adding machine map, which we now define more generally.  (Adding machines arise in many dynamical contexts; see, for example, \cite{AHK,Bl,BS,CPY,Ku,MM,N}.  See \cite{BK} for more detailed definitions and background.)

\begin{defn}[\cite{BK}]
Let $J=(j_1, j_2,\ldots)$ be a sequence of integers greater than or equal to 2.  Let $X_J$ be the Cantor set of all sequences $(a_1, a_2,\ldots)$ where $a_i\in\{0,1,\dots,j_i-1\}$ for all $i$, with $d(\A,\B)= \sum_{i=1}^\infty \delta(a_i,b_i)/2^i$.  Define the \emph{adding machine map} $f_J:X_J\to X_J$ by $f_J(a_1, a_2,\ldots) = (a_1, a_2,\ldots) + (1,0,0,\ldots)$, where addition is defined componentwise mod $j_i$, with carrying to the right.  
\end{defn}

It is easy to see that every adding machine $f_J$ is minimal.  The following alternative definition of an adding machine as an inverse limit of cyclic groups will be useful.

\begin{lemma}[{\cite[p.\ 21]{GS}}]\label{lem:eqdefn}
Let $m_1,m_2,\dots$ be an increasing sequence of positive integers such that $m_i$ divides $m_{i+1}$ for all $i$.
Let $p_{m_i}:\Z/m_i\to \Z/m_i$ be the cyclic permutation given by $p_{m_i}(n)=n+1 \mod m_i$.  Then the inverse limit map $p(\{n_i\})=\{n_i+1\mod m_i\}$ is topologically conjugate to the adding machine map $f_J$, where $J=(m_1, m_2/m_1, m_3/m_2,\dots)$.
\end{lemma}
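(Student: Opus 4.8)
The plan is to write down an explicit topological conjugacy using a mixed‑radix (variable‑base) numeral system. Set $J=(j_1,j_2,\dots)$ with $j_1=m_1$ and $j_i=m_i/m_{i-1}$ for $i\ge2$ (discarding an initial trivial term should $m_1=1$, so that each $j_i\ge2$); the divisibility hypothesis makes each $j_i$ a positive integer, and telescoping gives $m_i=j_1j_2\cdots j_i$ for all $i$. The first ingredient is the standard fact, proved by repeated division with remainder, that every $n\in\{0,1,\dots,m_i-1\}$ has a \emph{unique} expansion $n=a_1+a_2j_1+a_3j_1j_2+\cdots+a_ij_1j_2\cdots j_{i-1}$ with $a_k\in\{0,\dots,j_k-1\}$; write $a_k(n,i)$ for its digits.

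The second ingredient is the compatibility of these digits with the bonding maps of the inverse limit. The map $\Z/m_{i+1}\to\Z/m_i$ used to form the inverse limit is reduction mod $m_i$ (which exists since $m_i$ divides $m_{i+1}$), and on expansions it is simply truncation: $a_k(n\bmod m_i,\,i)=a_k(n,i+1)$ for $k\le i$. Consequently a compatible sequence $\{n_i\}$ in the inverse limit assigns to each $k$ a digit $a_k$ that is independent of which coordinate $n_i$ (with $i\ge k$) it is read from, so $h(\{n_i\}):=(a_1,a_2,\dots)$ is a well‑defined point of $X_J$; conversely $(a_1,a_2,\dots)\in X_J$ determines a compatible sequence by $n_i:=\sum_{k=1}^{i}a_k\,j_1\cdots j_{k-1}$. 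Thus $h$ is a bijection. For continuity, note that the first $k$ digits of $h(\{n_i\})$ depend only on the single coordinate $n_k$ (which ranges over the discrete set $\Z/m_k$), so agreement of two points of the inverse limit on coordinates $1,\dots,k$ forces agreement of the images on the first $k$ digits; since the metric on $X_J$ is governed by the length of the agreeing initial block, $h$ is continuous, and symmetrically $h^{-1}$ is continuous because $n_i$ is a function of $a_1,\dots,a_i$. (Compactness of the two spaces also yields this once $h$ is known continuous.)

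Finally I would check that $h$ intertwines $p$ with $f_J$. Adding $1$ to every coordinate $n_i$ is, read through the expansion above, precisely the odometer operation: increment $a_1$; if it reaches $j_1$, reset it to $0$ and carry $1$ into the second digit; continue; and if every digit is maximal, all coordinates wrap around to $0$, matching $f_J(j_1-1,j_2-1,\dots)=(0,0,\dots)$. This is exactly the definition of $f_J(a_1,a_2,\dots)=(a_1,a_2,\dots)+(1,0,0,\dots)$ with carrying to the right, so $h\circ p=f_J\circ h$ and $h$ is the desired conjugacy.

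The main obstacle is bookkeeping rather than conceptual: the entire argument hinges on verifying the single identity $a_k(n\bmod m_i,i)=a_k(n,i+1)$ relating the mixed‑radix digits to the truncation maps of the inverse limit, since the well‑definedness of $h$, both continuity statements, and the intertwining with the dynamics all reduce to it.
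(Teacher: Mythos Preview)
Your argument is correct: the mixed-radix expansion is the standard conjugacy between the two models, and the key observation---that reduction mod $m_i$ truncates the digit string, so that a compatible sequence in the inverse limit determines a single infinite digit sequence---is exactly what makes it work. The paper itself does not prove this lemma; it is quoted from \cite{GS} (Golubitsky--Stewart), so there is no in-paper proof to compare against. What you have written is essentially the argument one finds in that reference and in other standard treatments of odometers.
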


The following result is essentially contained in exercises 8.22 and 9.18 of \cite{A}.  We include it here for completeness and to fill in the gaps in the proofs sketched there.

\begin{thm}[\cite{A}]
\label{thm:structure}
Let $f:X\to X$ be a chain transitive map.  Then either
	\begin{enumerate}
	\item There is a period $k\ge1$ such that $f$ cyclically permutes $k$ closed and open equivalence classes of $X$, and $f^k$ restricted to each equivalence class is chain mixing; or
	\item $f$ factors onto an adding machine map.
	\end{enumerate}
\end{thm}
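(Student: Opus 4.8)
The plan is to introduce, for each $\ep>0$, the relation $x\sim_\ep y$ meaning that there are $\ep$-chains from $x$ to $y$ \emph{and} from $y$ to $x$; since $f$ is chain transitive this is automatically true for all pairs, so the useful relation is instead a ``phase'' refinement of it. Concretely, fix $\ep>0$ and declare $x\approx_\ep y$ if there is an $\ep$-chain from $x$ to $y$ whose length is a multiple of every length of an $\ep$-chain from $y$ to itself — equivalently, following the irreducible-graph picture in \cite[\S4.5]{LM}, one looks at the set $L_\ep(x,y)$ of lengths of $\ep$-chains from $x$ to $y$ and shows (i) $L_\ep(x,x)$ is eventually a union of residue classes mod some period $k_\ep=\gcd L_\ep(x,x)$, (ii) $k_\ep$ is independent of $x$, and (iii) the sets $Q_0,\dots,Q_{k_\ep-1}$, where $Q_j=\{\,y: L_\ep(x_0,y)\subset j+k_\ep\Z \text{ eventually}\,\}$ for a fixed basepoint $x_0$, partition $X$, are permuted cyclically by $f$, and each is both open and closed. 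Openness/closedness comes from the standard $\tfrac\ep2$-to-$\ep$ thickening argument already used in the excerpt (a short $\tfrac\ep2$-chain from $y$ of a given length gives $\ep$-chains of that length from a whole neighborhood of $y$), together with compactness; cyclic permutation is immediate from the definition of the phase classes. This gives, for each $\ep$, a decomposition into $k_\ep$ clopen pieces on which $f^{k_\ep}$ is $\ep$-chain mixing — the ``graph'' attached to scale $\ep$.

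Next I would analyze how these decompositions behave as $\ep\to0$. If $\ep'<\ep$, then every $\ep'$-chain is an $\ep$-chain, so the $\ep'$-decomposition refines the $\ep$-decomposition and $k_\ep \mid k_{\ep'}$; thus the $k_\ep$ form a monotone (in the divisibility order) family of positive integers as $\ep$ decreases. Two cases arise. If this family is bounded, then it is eventually constant, equal to some $k$, for all sufficiently small $\ep$; one then takes the common refinement — really just the $\ep$-decomposition for small $\ep$ — to get $k$ clopen sets permuted cyclically by $f$, and needs to check that $f^k$ restricted to each is chain mixing (i.e. $\ep$-chain mixing for \emph{every} $\ep$, not just small ones, which is automatic since larger $\ep$ only helps). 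A small point: one must verify the $\ep$-decomposition stabilizes \emph{as sets}, not merely that $k_\ep$ stabilizes; this follows because once $k_\ep=k$ is fixed, a finer scale can only split a piece into pieces that are still unions of phase-$k$ classes, but the phase-$k$ class of a point is determined once and for all. That yields alternative~(1).

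If instead the family $\{k_\ep\}$ is unbounded, I would build the factor map onto an adding machine. Choose $\ep_1>\ep_2>\cdots\to0$ so that $k_{\ep_i}$ strictly increases (in divisibility), set $m_i=k_{\ep_i}$, so $m_i\mid m_{i+1}$, and use Lemma~\ref{lem:eqdefn}: the target is the inverse limit of the cyclic permutations $p_{m_i}$ on $\Z/m_i$. Define $\pi_i:X\to\Z/m_i$ by sending $y$ to the index $j$ of the $\ep_i$-phase class $Q_j^{(i)}$ containing it; this is continuous (each $Q_j^{(i)}$ is clopen), and it intertwines $f$ with $p_{m_i}$ because $f$ cyclically permutes the $Q_j^{(i)}$. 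The refinement property says $\pi_i$ is the composition of $\pi_{i+1}$ with the natural projection $\Z/m_{i+1}\to\Z/m_i$, so the $\pi_i$ assemble into a continuous map $\pi=(\pi_i):X\to \varprojlim \Z/m_i$ conjugating $f$ to $p$, i.e. to the adding machine $f_J$ with $J=(m_1,m_2/m_1,\dots)$. Surjectivity of $\pi$ follows from chain transitivity (every phase class at every level is nonempty and hit), giving alternative~(2).

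The main obstacle is the first bundle of claims at fixed $\ep$: proving that $L_\ep(x,x)$ is eventually periodic with period $\gcd L_\ep(x,x)$ (a Chicken-McNugget / numerical-semigroup fact, but one must be careful that the set of chain-lengths really is closed under addition — concatenating an $\ep$-chain from $x$ to $x$ with another is fine, so $L_\ep(x,x)$ is a numerical semigroup), that $k_\ep=\gcd L_\ep(x,x)$ is basepoint-independent, and that the phase classes are well-defined, clopen, and cyclically permuted. Everything after that — the divisibility monotonicity, the dichotomy, and the inverse-limit assembly — is comparatively routine bookkeeping, modulo the subtle point flagged above that the decompositions, not just their cardinalities, stabilize in the bounded case.
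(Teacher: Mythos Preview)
Your proposal is correct and follows essentially the same route as the paper: define the period $k_\ep=\gcd L_\ep(x,x)$ (shown basepoint-independent), partition $X$ into $k_\ep$ clopen phase classes permuted cyclically by $f$, use the numerical-semigroup (Sylvester) argument to get $\ep$-chain mixing of $f^{k_\ep}$ on each class, then let $\ep\to0$ and split on whether $k_\ep$ stabilizes (yielding case~(1)) or grows without bound (yielding the inverse-limit factor onto an adding machine via Lemma~\ref{lem:eqdefn}). Your initial phrasing of $\approx_\ep$ (``length a multiple of \emph{every} length of an $\ep$-chain from $y$ to itself'') is malformed---no finite length can be a multiple of every element of the infinite set $L_\ep(y,y)$---but you immediately replace it with the correct $L_\ep$/phase-class formulation, which is exactly the paper's $\sim_\ep$.
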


The general idea for the proof is to define an equivalence relation $\sim_{\ep}$ on $X$.  The induced map on the quotient space $X/\sim_{\ep}$ is a periodic orbit.  Sending $\ep$ to zero yields an equivalence relation $\sim$; thus $f$ factors onto the inverse limit of the periodic orbits, which will be either a periodic orbit or an adding machine.

\begin{lemma} Let $f:X\to X$ be a chain transitive map and let $\ep>0$.  There exists $k_{\ep}\ge 1$ such that for any $x\in X$, $k_{\ep}$ is the greatest common divisor of the lengths of all $\ep$-chains from $x$ to itself.
\end{lemma}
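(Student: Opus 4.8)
The plan is to fix $\ep>0$ and, for each $x\in X$, introduce the set
$S(x)$ of positive integers $n$ for which there is an $\ep$-chain of length $n$ from $x$ to itself. I would first check that $S(x)$ is nonempty and closed under addition, so that $k_\ep(x):=\gcd S(x)$ is a well-defined positive integer, and then prove that $k_\ep(x)$ is the same for every $x$; that common value is the desired $k_\ep$.

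For nonemptiness, chain transitivity supplies an $\ep$-chain $(f(x)=w_0,w_1,\dots,w_\ell=x)$ from $f(x)$ to $x$, and prepending $x$ gives the $\ep$-chain $(x,w_0,w_1,\dots,w_\ell)$ from $x$ to $x$ (the only new inequality to verify is $d(f(x),w_0)=d(f(x),f(x))=0\le\ep$), which has positive length $\ell+1$. Closure under addition is the elementary concatenation fact: if $(x_0,\dots,x_m)$ and $(y_0,\dots,y_n)$ are $\ep$-chains with $x_m=y_0$, then $(x_0,\dots,x_m,y_1,\dots,y_n)$ is an $\ep$-chain of length $m+n$ from $x_0$ to $y_n$; applying this to two loops at $x$ shows $S(x)+S(x)\subseteq S(x)$. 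Hence $\gcd S(x)$, being the gcd of a nonempty set of positive integers, is a well-defined integer $\ge 1$.

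To show $k_\ep(x)=k_\ep(y)$ for all $x,y\in X$: if $x=y$ there is nothing to prove, so assume $x\ne y$ and use chain transitivity to choose an $\ep$-chain $\alpha$ from $x$ to $y$ of some length $a\ge1$ and an $\ep$-chain $\beta$ from $y$ to $x$ of some length $b\ge1$. Concatenating $\alpha$ with $\beta$ produces a loop at $x$ of length $a+b$, so $k_\ep(x)\mid a+b$. Now take any $n\in S(y)$, witnessed by a loop $\gamma$ at $y$ of length $n$; then $\alpha\gamma\beta$ is a loop at $x$ of length $a+n+b$, so $k_\ep(x)\mid a+n+b$, and combined with $k_\ep(x)\mid a+b$ this gives $k_\ep(x)\mid n$. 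Since $n\in S(y)$ was arbitrary, $k_\ep(x)$ is a common divisor of all elements of $S(y)$, hence $k_\ep(x)\mid \gcd S(y)=k_\ep(y)$. By symmetry $k_\ep(y)\mid k_\ep(x)$, so $k_\ep(x)=k_\ep(y)$, as required.

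I do not expect a serious obstacle here: the argument is essentially the classical observation that, for a fixed $\ep$, the $\ep$-chain reachability relation behaves like the edge relation of a strongly connected graph, whose period does not depend on the chosen vertex. The only points that need care are the bookkeeping in the concatenation of $\ep$-chains and the insistence on working with loops of positive length so that the gcd is meaningful (noting, though, that adjoining the trivial loop of length $0$ would not change the gcd anyway).
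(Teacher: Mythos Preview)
Your proof is correct and follows essentially the same approach as the paper: both define $k_\ep(x)$ as the gcd of loop lengths at $x$, then show $k_\ep(x)\mid n$ for every $n\in S(y)$ by splicing a $y$-loop of length $n$ into an $\ep$-chain from $x$ to $y$ and back. You are simply more explicit about the preliminary bookkeeping (nonemptiness of $S(x)$ and closure under concatenation), which the paper leaves implicit.
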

\begin{proof}
For each $x\in X$ define $k_\ep(x)$ to be the greatest common divisor of the lengths of $\ep$-chains from $x$ to itself.  Consider two points $x,y\in X$. Let $(y_0=y, y_1,\ldots, y_n=y)$ be any $\ep$-chain from $y$ to itself.  We show that $k_\ep(x)$ divides $n$.  Let $(x_0=x, x_1,\ldots,y,\ldots, x_{mk_\ep(x)}=x)$ be an $\ep$-chain from $x$ to $y$ and back to $x$.  The $\ep$-chain $(x_0=x, x_1,\ldots,y, y_1,\ldots,y_n=y,\ldots, x_{mk_\ep(x)}=x)$ from $x$ to itself has length $mk_\ep(x)+n$.  Since $mk_\ep(x)+n$ is necessarily a multiple of $k_\ep(x)$, $n$ is as well.
\end{proof}

Define a relation on $X$ by setting $x\sim_\ep y$ if there is an $\ep$-chain from $x$ to $y$ of length a multiple of $k_\ep$.  By concatenating chains, it is easy to see that this is an equivalence relation.  In fact, by the definition of $k_\ep$, if $x\sim_\ep y$, then \emph{any} $\ep$-chain from $x$ to $y$ must have length a multiple of $k_\ep$.  We may now define another equivalence relation on $X$ by saying $x\sim y$ if $x\sim_\ep y$ for all $\ep>0$.  

\begin{remark}Since $f$ is chain transitive, it is easy to see that $x\sim_\ep y$ if and only if there is a point $z$ such that there are $\ep$-chains from $x$ to $z$ and from $y$ to $z$ of the same length. Thus another way to define the equivalence relation $\sim$ is as the set of pairs $(x,y)$ such that the set of points reachable from $(x,y)$ with $(f\times f)$-chains (${\mathcal C}(f\times f)(x,y)$, in the notation of \cite{A}) meets the diagonal.
\end{remark}

\begin{lemma} Let $f:X\to X$ be a chain transitive map.  For any $\ep>0$ the equivalence relation $\sim_{\ep}$ is both open and closed.  The equivalence relation $\sim$ is closed.
\end{lemma}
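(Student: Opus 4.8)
The plan is to establish the claim in three steps: show that each $\sim_\ep$-equivalence class is open; deduce from compactness that there are only finitely many classes, so that each is also closed; and then obtain the closedness of $\sim$ as an immediate consequence.

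For the openness of $\sim_\ep$, I would show that the class of a point $x$ contains the ball $B_{\ep/2}(x)$. Since $f$ is chain transitive it is chain recurrent, so there is an $\ep/2$-chain $(x=x_0,x_1,\dots,x_m=x)$ from $x$ to itself, which we may take to have positive length; its length $m$ is a multiple of $k_\ep$, being the length of an $\ep$-chain from $x$ to $x$. If $x'\in B_{\ep/2}(x)$, then $d(f(x_{m-1}),x')\le d(f(x_{m-1}),x)+d(x,x')\le \ep/2+\ep/2=\ep$, so $(x_0,x_1,\dots,x_{m-1},x')$ is an $\ep$-chain from $x$ to $x'$ of length $m$; hence $x\sim_\ep x'$. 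Thus every class is a neighborhood of each of its points, so is open. The single delicate point is that the defining inequality of an $\ep$-chain is not strict, so one cannot simply move the endpoints of an arbitrary $\ep$-chain; chain transitivity is exactly what provides the needed slack, by furnishing an $\ep/2$-chain, after which only the last vertex is perturbed.

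Given openness, the $\sim_\ep$-classes form a partition of $X$ into open sets; since $X$ is compact and the classes are nonempty and pairwise disjoint, there are only finitely many of them, say $C_1,\dots,C_r$. Each $C_i$ is then closed, being the complement of $\bigcup_{j\ne i}C_j$, so $\sim_\ep=\bigcup_{i=1}^r(C_i\times C_i)$ is both open and closed as a subset of $X\times X$. Finally, $\sim=\bigcap_{\ep>0}\sim_\ep$ by definition, and an intersection of closed sets is closed, so $\sim$ is closed. (If one prefers a countable intersection, note that $\sim_{\ep'}\subseteq\sim_\ep$ when $\ep'<\ep$: the lengths of $\ep'$-chains from a point to itself are among the lengths of its $\ep$-chains, so $k_\ep$ divides $k_{\ep'}$, and this divisibility passes to chain lengths.) I expect the openness step to be the only part with real content; the rest is a routine appeal to compactness and to the stability of closed sets under intersection.
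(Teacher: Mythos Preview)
Your proof is correct and follows essentially the same approach as the paper. Both arguments establish openness by producing an $\ep/2$-chain (the paper from $x$ to $y$, you from $x$ to itself) and perturbing only the final vertex to reach any nearby point; the key idea---using chain transitivity to obtain the $\ep/2$ slack---is identical. Your treatment of closedness (finitely many open classes by compactness, hence each closed, hence $\sim=\bigcap_\ep\sim_\ep$ closed) is more explicit than the paper's, which simply asserts that closedness of $\sim_\ep$ and $\sim$ is ``clear,'' but this is an expository refinement rather than a different route.
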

\begin{proof}
It is clear that the relations $\sim_{\ep}$ and $\sim$ are closed.  To see that $\sim_{\ep}$ is open, observe that an $\frac\ep2$-chain is a fortiori an $\ep$-chain.  Since $f$ is chain transitive, there is an $\frac\ep2$-chain from $x$ to $y$ for any points $x$ and $y$, and, if $x\sim_\ep y$, the length of the chain must be a multiple of $k_\ep$.  Thus there is an $\ep$-chain of the same length from $x$ to any point within $\frac\ep2$ of $y$ (use the same $\frac\ep2$ chain and change only the last element).
\end{proof}

Now, let $\tilde K=X/\sim$ be the quotient space and $\tilde f:\tilde K\to \tilde K$ be the map induced by $f$.

\begin{proof}[Proof of Theorem \ref{thm:structure}]
To begin, let $\ep>0$ be fixed. There are $k_\ep$ equivalence classes for $\sim_{\ep}$, $f$ cycles among the classes periodically, and each class is invariant under $f^k$.  The quantity $k_\ep$ is nondecreasing as $\ep\to0$, and in fact $k_{\ep_2}$ divides $k_{\ep_1}$ if $\ep_1\le\ep_2$.  Either $k_\ep$ stabilizes at some $k$, or it grows without bound.  We consider the two cases separately.

If $k_\ep$ stabilizes at $k=1$, then there is only one $\sim$ equivalence class, and $f$ is chain mixing.  To see this, observe that for any $\ep$ and any $x$, since 1 is the GCD of the lengths of $\ep$-chains from $x$ to itself, there exist $\ep$-chains from $x$ to itself of lengths $m$ and $n$ with $m$ and $n$ relatively prime.  Sylvester's formula (\cite{Ra}) tell us that by concatenating copies of these loops, we can get an $\ep$-chain from $x$ to itself of length $N$ for any $N>mn-m-n$.  By compactness, there is an $M>0$ such that between any two points in $X$ there is an $\ep$-chain of length less than or equal to $M$.  By adding a loop at $x$ to a chain from $x$ to $y$, we can get a chain from $x$ to $y$ of any length greater than $M+N$, so $f$ is chain mixing.

More generally, if $k=k_{\ep_0}$, then the equivalence relation $\sim$ is the same as $\sim_{\ep_0}$.   Thus there are $k$ equivalence classes, $f$ cycles among the classes periodically, and each class is invariant under $f^k$. An argument similar to that for $k=1$ shows that $f^k$ is chain mixing on each equivalence class.

If, on the other hand, $k_\ep$ grows without bound, then the period of $f$'s cycling goes to infinity as $\ep$ shrinks.  In this case, $f$ factors onto an adding machine map.  More precisely, 
let $\tilde K_\ep$ be the quotient space $X$ modulo $\sim_\ep$, with the quotient topology, and let $\tilde f_\ep:\tilde K_\ep\to \tilde K_\ep$ be the induced map.  Let $\tilde f:\tilde K\to \tilde K$ be the induced map on the quotient space $\tilde K=X/\sim$. The map $\tilde f_\ep:\tilde K_\ep\to \tilde K_\ep$ is conjugate to the cyclic permutation $p_{k_\ep}:\Z/k_\ep\Z\to \Z/k_\ep\Z$ given by $p_{k_\ep}(n)=n+1 \mod k_\ep$.  Thus, if we take a sequence $\{\ep_i\}$ decreasing to 0 such that the sequence $\{k_{\ep_i}\}$ is strictly increasing, then $\tilde f$ is conjugate to the map $g:Z\to Z$, where $Z$ is the inverse limit of the sequence $\{p_{k_{\ep_i}}:\Z/k_{\ep_i}\Z\to \Z/k_{\ep_i}\Z\}$ and $g$ is given by $g(\{x_i\})=\{x_i+1 \mod k_{\ep_i}\}$.  Then, by Lemma~\ref{lem:eqdefn}, $g$ is topologically conjugate to the adding machine map $f_J$, where $J=(k_{\ep_1}, k_{\ep_2}/k_{\ep_1},  k_{\ep_3}/k_{\ep_2},\dots)$.  Thus the conjugacy class of $f_J$ is independent of the choice of the sequence $\{\ep_i\}$.  In fact, it is shown in \cite{BS} that the conjugacy class of an adding machine $f_{(j_1,j_2,\ldots)}$ depends only on the sums of the powers to which prime numbers occur as factors of the $j_i$'s.
\end{proof}
	
We have a number of nice corollaries that follow from Theorem \ref{thm:structure}.
	
\begin{cor}
Let $f:X\to X$ be a chain transitive map.  If $f$ has a point of period $n$, then there is a $k$ dividing $n$ such that $X$ decomposes into $k$ closed and open equivalence classes, $f$ permutes the equivalence classes cyclically, and $f^k$ restricted to each equivalence class is chain mixing.
\end{cor}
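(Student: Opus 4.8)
The plan is to invoke Theorem~\ref{thm:structure} and eliminate the adding-machine alternative using the periodic point, then extract the divisibility by hand. First I would record the elementary fact that an adding machine map $f_J$ has no periodic points: $X_J$ is infinite (each $j_i\ge 2$) and $f_J$ is minimal, so a periodic orbit of $f_J$ would be a finite --- hence proper --- closed invariant subset, contradicting minimality. Next, if $f$ factors onto a map $g$ via a surjection $\pi$ with $\pi\circ f=g\circ\pi$, and $x$ has period $n$ under $f$, then $\pi(x)$ has period dividing $n$ under $g$; thus any factor of $(X,f)$ inherits a periodic point. Combining these two observations, alternative (2) of Theorem~\ref{thm:structure} is incompatible with $f$ having a periodic point. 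Hence alternative (1) holds: there is $k\ge 1$ such that $f$ cyclically permutes $k$ closed and open $\sim$-equivalence classes of $X$ and $f^k$ restricted to each is chain mixing. This already gives the entire statement except for the claim that $k$ divides $n$.

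For the divisibility I would use what the proof of Theorem~\ref{thm:structure} establishes in the stabilizing case, namely that $k=k_{\ep_0}$ for some $\ep_0>0$, where (by the first lemma of the section) $k_{\ep_0}$ is the greatest common divisor of the lengths of all $\ep_0$-chains from any fixed point to itself. Take $x$ to be the point of period $n$. Its genuine orbit $(x,f(x),\dots,f^n(x)=x)$ is an $\ep$-chain of length $n$ from $x$ to itself for \emph{every} $\ep>0$, in particular for $\ep=\ep_0$. Therefore $k=k_{\ep_0}$ divides $n$, which completes the argument.

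The argument is short, and the only delicate point is the passage ``factors onto an adding machine'' $\Rightarrow$ ``has no periodic point''; the main obstacle, such as it is, is simply making the observation that minimality on an infinite space forbids periodic orbits precise and noting that factors inherit periodic points with period dividing the original. Everything else --- the reduction to case (1), and the divisibility via the orbit viewed as an $\ep_0$-chain --- is routine given the results already proved in this section.
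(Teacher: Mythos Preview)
Your argument is correct and follows the same route as the paper: invoke Theorem~\ref{thm:structure}, eliminate alternative~(2) by observing that adding machines have no periodic points while factor maps inherit them, and then read off the divisibility. The only cosmetic difference is in that last step: the paper obtains $k\mid n$ by noting that the image of $x$ in the finite quotient $\tilde K$ has period exactly $k$ (since $\tilde f$ is a $k$-cycle) and this period must divide $n$, whereas you go back to $k=k_{\ep_0}=\gcd$ of $\ep_0$-chain lengths and feed in the genuine orbit of length $n$ --- both are one-line observations.
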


\begin{proof}
If $x\in X$ has period $n$ under $f$, then the image of $x$ in $\tilde K$ has period $k$ under $\tilde f$, for some $k$ dividing $n$.  Thus $\tilde f$ cannot be an adding machine map.
\end{proof}

\begin{cor}
Let $f:X\to X$ be a chain transitive map.  If one of the $\sim$-equivalence classes is open, then there is a $k$ such that $X$ decomposes into $k$ closed and open equivalence classes, $f$ permutes the equivalence classes cyclically, and $f^k$ restricted to each equivalence class is chain mixing.
\end{cor}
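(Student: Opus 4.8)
The plan is to mimic the proof of the preceding corollary: I would show that the hypothesis forces alternative (1) of Theorem~\ref{thm:structure} by ruling out alternative (2), i.e., by showing that $f$ cannot factor onto an adding machine map when some $\sim$-class is open. Alternative (1) is, word for word, the conclusion we want, so this suffices.

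The key step is a purely topological observation. Let $\pi\colon X\to\tilde K=X/\!\sim$ be the quotient map and suppose the $\sim$-class $[x]_\sim$ is open in $X$. Since $\pi^{-1}(\pi(x))=[x]_\sim$, the definition of the quotient topology on $\tilde K$ shows that $\{\pi(x)\}$ is open in $\tilde K$; that is, $\pi(x)$ is an isolated point of $\tilde K$. Now suppose, toward a contradiction, that alternative (2) of Theorem~\ref{thm:structure} holds. Reading off the proof of that theorem in the case $k_\ep\to\infty$, the induced map $\tilde f\colon\tilde K\to\tilde K$ is topologically conjugate to an adding machine $f_J\colon X_J\to X_J$ with $J=(k_{\ep_1},k_{\ep_2}/k_{\ep_1},\dots)$ an infinite sequence of integers $\ge 2$; hence $\tilde K$ is homeomorphic to $X_J$. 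But $X_J$ is a Cantor set — compact, metrizable, totally disconnected, and perfect (the last because $J$ is infinite and each $j_i\ge 2$) — so $X_J$ has no isolated points. This contradicts the existence of the isolated point $\pi(x)$. Therefore alternative (1) holds: there is a $k$ such that $X$ decomposes into $k$ closed and open $\sim$-classes, $f$ permutes them cyclically, and $f^k$ is chain mixing on each.

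I expect no serious obstacle here; the proof is short once one recognizes ``open equivalence class'' as ``isolated point in the quotient.'' The only points needing a little care are (i) citing the definition of the quotient topology correctly to produce the isolated point, and (ii) confirming, from the proof of Theorem~\ref{thm:structure}, that alternative (2) really delivers $(\tilde K,\tilde f)\cong(X_J,f_J)$ with $J$ infinite, so that the perfectness of $X_J$ is available. If one preferred to avoid this topological shortcut, there is a more hands-on route: since $[x]_\sim=\bigcap_i[x]_{\sim_{\ep_i}}$ is a nested intersection of clopen sets and is assumed open, its complement is closed, hence compact, and is a nested union of clopen sets, so it stabilizes and $[x]_\sim=[x]_{\sim_{\ep_N}}$ for some $N$; one then checks that the $\sim_\ep$-class of $x$ is the fixed clopen set $[x]_{\sim_{\ep_N}}$ for all $\ep\le\ep_N$, deduces that $k_\ep$ stabilizes, and applies the stabilizing case of Theorem~\ref{thm:structure}. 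The quotient-topology argument is cleaner, so that is the one I would write up.
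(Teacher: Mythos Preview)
Your proposal is correct and follows exactly the paper's own argument: an open $\sim$-class projects to a point of $\tilde K$ that is both open and closed, hence isolated, so $\tilde K$ cannot be a Cantor set and alternative (1) of Theorem~\ref{thm:structure} must hold. The paper's proof is a two-sentence version of what you wrote, omitting the routine verification that ``open saturated set'' means ``open image in the quotient'' and that Cantor sets are perfect.
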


\begin{proof}
If one of the $\sim$-equivalence classes is open, then its projection in $\tilde K$ is a single point that is both open and closed.  Thus $\tilde K$ cannot be a Cantor set.
\end{proof}

By definition, if $f$ is chain mixing, then it is totally chain transitive.  By Theorem \ref{thm:structure}, the converse is also true.

\begin{cor}
$f:X\to X$ is totally chain transitive if and only if it is chain mixing.
\end{cor}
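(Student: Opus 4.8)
The plan is to prove the two implications separately; the forward one is essentially formal, while the reverse one is where Theorem~\ref{thm:structure} does the work. For ``chain mixing $\Rightarrow$ totally chain transitive'' I would first observe that every power of a chain mixing map is again chain mixing. Given $\ep>0$ and $n\ge1$, use the uniform continuity of $f,\dots,f^{n-1}$ to pick $\delta\le\ep$ with $d(a,b)\le\delta\Rightarrow d(f^j(a),f^j(b))\le\ep/n$ for $0\le j<n$. If $f$ is $\delta$-chain mixing, then for every sufficiently large multiple $n\ell$ of $n$ there is an $f$-$\delta$-chain from $x$ to $y$ of length $n\ell$; sampling it every $n$ steps yields points $u_0=x,u_1,\dots,u_\ell=y$, and a telescoping estimate together with the choice of $\delta$ shows $d(f^n(u_{k-1}),u_k)\le\ep$, so this is an $f^n$-$\ep$-chain of length $\ell$. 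Hence $f^n$ is $\ep$-chain mixing, so in particular chain transitive, and $f$ is totally chain transitive.

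For the converse, suppose $f$ is totally chain transitive; in particular $f$ is chain transitive, so Theorem~\ref{thm:structure} applies. I claim that unless $f$ is already chain mixing there exist an integer $n\ge2$ and a partition $X=Y_0\sqcup\dots\sqcup Y_{n-1}$ into nonempty closed-and-open sets with $f(Y_i)\subseteq Y_{i+1\bmod n}$. In alternative~(1) of the theorem with $k=1$, $f$ is chain mixing by definition; with $k\ge2$, the $k$ equivalence classes are such a partition. In alternative~(2), let $\pi\colon X\to X_J$ be the factor map onto the adding machine $f_J$, and set $Y_a=\pi^{-1}\bigl(\{\A\in X_J:a_1=a\}\bigr)$ for $a=0,\dots,j_1-1$: since $\{a_1=a\}$ is a clopen cylinder in $X_J$, $\pi$ is a continuous surjection, and $f_J$ cyclically permutes these cylinders (it adds $1$ to the first digit, carrying when the digit reaches $j_1$), the sets $Y_a$ are nonempty, clopen, partition $X$, and satisfy $f(Y_a)\subseteq Y_{a+1\bmod j_1}$ with $j_1\ge2$. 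Granting the claim, $f^n$ maps each $Y_i$ into itself, and since the finitely many disjoint compact sets $Y_i$ lie a positive distance $\rho$ apart, for any $\ep<\rho$ every $f^n$-$\ep$-chain beginning in $Y_0$ remains in $Y_0$ and so never reaches the nonempty set $Y_1$; hence $f^n$ is not chain transitive, contradicting total chain transitivity. Thus the only surviving possibility is alternative~(1) with $k=1$, that is, $f$ is chain mixing.

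The whole of the reverse implication is carried by Theorem~\ref{thm:structure}: the cyclically permuted clopen pieces (when $k$ is finite) and the adding-machine factor (when it is not) are exactly the two obstructions to chain mixing, and each forces some $f^n$ ($n\ge2$) to fail chain transitivity. The only points needing a little care are the routine verifications that the adding machine really does permute its first-coordinate cylinders cyclically (immediate from the definition of $f_J$) and that this cyclic, forward-invariant clopen structure pulls back along the semiconjugacy $\pi$; I do not expect any genuine obstacle beyond this bookkeeping.
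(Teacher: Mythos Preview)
Your proposal is correct and follows essentially the same route as the paper. The paper's entire argument is the sentence preceding the corollary---``By definition, if $f$ is chain mixing, then it is totally chain transitive. By Theorem~\ref{thm:structure}, the converse is also true''---and you have simply filled in the details: the uniform-continuity sampling argument for the forward direction (which the paper later records separately in the propositions on powers of chain mixing maps), and, for the converse, the explicit extraction of a cyclically permuted clopen partition in both alternatives of Theorem~\ref{thm:structure}, including the pull-back of first-coordinate cylinders in the adding-machine case.
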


In fact, as is shown in \cite{Y}, these two conditions are also equivalent to chain transitivity for $f\times f$.
To put this result in perspective, we contrast it with topologically transitive maps.  (See also \cite{Ba}.)

\begin{ex}
An irrational rotation $f:S^{1}\to S^{1}$ is totally topologically transitive, but not topologically mixing.  

\end{ex}

Conley's fundamental theorem of dynamical systems (\cite[\S9.1]{R}), says that the interesting dynamics of $f$ takes place on the chain recurrent set $\rec$, with all other orbits simply going from one piece of $\rec$ to another.  In some special cases (if $\rec$ has a hyperbolic structure, for example (\cite[\S9.5]{R})), $\rec$ decomposes into finitely many chain transitive pieces (basic sets), so there is not much difference between chain recurrence and chain transitivity.  In general, the situation is more complicated, but we do have the following result.

\begin{cor}\label{cor:connected}
Let $X$ be connected and $f:X\to X$ be continuous.  Then the following are equivalent:
\begin{enumerate}
\item $f$ is chain recurrent.
\item $f$ is chain transitive.
\item $f$ is totally chain transitive.
\item $f$ is chain mixing.
\end{enumerate}
\end{cor}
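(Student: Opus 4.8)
The implications $(4)\Rightarrow(3)\Rightarrow(2)\Rightarrow(1)$ are immediate: chain mixing implies $\ep$-chain mixing for every $\ep$, hence in particular there are $\ep$-chains of every sufficiently long length between any two points, so $f^n$ is chain transitive for all $n$; chain transitivity trivially implies chain recurrence (take $y=x$). So the whole content is the implication $(1)\Rightarrow(4)$, and in fact it suffices to prove $(1)\Rightarrow(2)$, since then $f$ restricted to $X$ is chain transitive and Theorem~\ref{thm:structure} applies: either $f^k$ is chain mixing on each of $k$ closed-and-open equivalence classes, or $f$ factors onto an adding machine. In the first case, if $k>1$ the $k$ equivalence classes would be a partition of $X$ into $\ge 2$ nonempty closed-and-open sets, contradicting connectedness; so $k=1$ and $f$ is chain mixing. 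In the second case, an adding machine factor would pull back the clopen cylinder sets of the Cantor set to a nontrivial partition of $X$ into clopen sets, again contradicting connectedness. Hence $(2)\Rightarrow(4)$ for connected $X$, and the corollary reduces to showing a chain recurrent map on a connected space is chain transitive.

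To prove $(1)\Rightarrow(2)$: fix $\ep>0$ and define a relation on $X$ by $x\approx y$ if there is an $\ep$-chain from $x$ to $y$. I claim each $\approx$-class is open. Indeed, if there is an $\ep$-chain from $x$ to $y$ then, since $f$ is continuous, for $z$ close enough to $x$ there is an $\frac{\ep}{2}$-or-so perturbation making $(z, x_1, \dots, x_n=y)$ still an $\ep$-chain, so a neighborhood of $x$ maps into the class of $y$; and changing the last entry shows a neighborhood of $y$ lies in the same class. (One has to be slightly careful: openness on the ``source'' side uses uniform continuity of $f$ to absorb the initial error, openness on the ``target'' side is immediate.) By connectedness of $X$, there can be only one $\approx$-class if we can show there is at least one $\approx$-class and the classes partition $X$ — but $\approx$ need not be symmetric in general, so instead I would argue as follows. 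Since $f$ is chain recurrent, every $x$ has an $\ep$-chain to itself, so $x \approx x$; and if $x\approx y$ via a chain and $y$ is chain recurrent, one can continue. The clean statement is: the relation ``$x$ and $y$ can be joined by $\ep$-chains in both directions'' is an open equivalence relation (transitivity by concatenation, reflexivity by chain recurrence), its classes are open, they cover $X$, so by connectedness there is just one class — hence $f$ is chain transitive at level $\ep$. Letting $\ep\to 0$ gives chain transitivity.

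The main obstacle is the asymmetry issue just flagged: chain recurrence gives each point a return chain but does not obviously let you get from $x$ to an arbitrary $y$. The resolution is that chain recurrence makes the relation ``$x\to y$ and $y\to x$ by $\ep$-chains'' reflexive, and openness of this relation's classes together with connectedness forces transitivity between all points at each fixed $\ep$; the key technical point to verify carefully is the openness of the $\ep$-reachability classes, for which uniform continuity of $f$ on the compact space $X$ is exactly what is needed to absorb the error introduced by replacing the starting point. Once $\ep$-chain transitivity holds for every $\ep>0$, chain transitivity is immediate, and then Theorem~\ref{thm:structure} plus connectedness finishes the argument as above.
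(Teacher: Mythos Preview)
Your proposal is correct and follows essentially the same route as the paper: the trivial chain $(4)\Rightarrow(3)\Rightarrow(2)\Rightarrow(1)$, then $(2)\Rightarrow(4)$ via Theorem~\ref{thm:structure} plus connectedness, and $(1)\Rightarrow(2)$ by showing that the symmetric $\ep$-chain relation is an open equivalence relation whose classes must cover the connected space. The one place where the paper is more explicit is the openness step: rather than perturbing the endpoints of a given $\ep$-chain directly (which can fail when a jump has size exactly $\ep$), the paper takes an $\ep/2$-chain from $y$ to itself (available by chain recurrence), appends it to the chain from $x$ to $y$, and replaces only its final point by the nearby $z$---this is the concrete realization of the ``absorb the error'' idea you flagged.
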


\begin{proof}

Clearly (4) $\Rightarrow$  (3) $\Rightarrow$ (2) $\Rightarrow$ (1). By Theorem  \ref{thm:structure}, connectivity and chain transitivity imply that $f$ is chain mixing.  So it is enough to show that chain recurrence implies chain transitivity.
This follows from the proof of Proposition 1.1 in \cite{AkinOxtoby}, but for completeness we now give a more direct proof.
Assume that $f$ is chain recurrent and pick $\ep>0$.  We say that $x$ and $y$ are $\ep$-chain equivalent if there are $\ep$-chains from $x$ to $y$ and from $y$ to $x$.  Since $f$ is chain recurrent, this is an equivalence relation.  So to finish the proof it suffices to show that this is an open condition.    Let $x$ and $y$ be $\ep$-chain equivalent. Choose $\delta\le\ep/2$ such that if $d(y,y')<\delta$, then $d(f(y),f(y'))<\ep/2$.  It suffices to show that $x$ is $\ep$-chain equivalent to an arbitrary $z\in B_{\delta}(y)$.  Let $(x_0=x,x_1,\ldots, x_n=y)$ be an $\ep$-chain and let $(z_0=y, z_{1},\ldots, z_{m}=y)$ be an $\ep/2$-chain from $y$ to itself.  Then $(x_0=x,x_1,\ldots, x_n=y=z_{0},\ldots, z_{m-1},z)$ is an $\ep$-chain from $x$ to $z$.  Similarly, let $(y_0=y,y_1,\ldots,y_r=x)$ be an $\ep$-chain from $y$ to $x$.  Then  $(z, z_{1},\ldots, z_{m}=y=y_{0},\ldots, y_n=x)$ is an $\ep$-chain from $z$ to $x$.  Thus $x$ is $\ep$-chain equivalent to $z$.
\end{proof}

Recall (from \cite{Aus}, for example) the definition $R_\pi=\{(x,y): \pi(x)=\pi(y)\}$ for a factor map $\pi:(X,f)\to(Y,g)$.  The system $(X,f)$ is \emph{weak mixing} if $f\times f$ is topologically transitive; the factor map $\pi$ is \emph{weak mixing} if the restriction of $f\times f$ to $R_\pi$ is topologically transitive.  Analogously, we will define the factor map $\pi$ to be {\em chain mixing} if the restriction of $f\times f$ to $R_\pi$ is chain transitive.  The following proposition shows that when $\pi$ is the quotient map associated to $\sim$, then $\pi$ is chain mixing.

\begin{prop}
The relation $R_\pi=\sim$ is a maximal closed, chain transitive, invariant subset of $X\times X$.
\end{prop}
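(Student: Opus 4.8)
The plan is to show that $R_\pi$ coincides with the chain component (for $f\times f$) of the diagonal $\Delta=\{(x,x):x\in X\}$; once this identification is in hand, chain transitivity of $R_\pi$ is automatic and maximality is nearly so. That $R_\pi=\;\sim$ is closed was proved above, and $R_\pi$ is $(f\times f)$-invariant because it is the relation of a factor map: if $\pi(x)=\pi(y)$ then $\pi(f(x))=\tilde f\pi(x)=\tilde f\pi(y)=\pi(f(y))$. Also note $\Delta\subseteq R_\pi$, since $x\sim x$ for every $x$ (every $\ep$-loop has length a multiple of $k_\ep$).

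The first technical ingredient is a cocycle. For each $\delta>0$ I would define $\rho_\delta\colon X\times X\to\Z/k_\delta\Z$ by letting $\rho_\delta(x,y)$ be the length, reduced mod $k_\delta$, of an arbitrary $\delta$-chain from $x$ to $y$; by the greatest-common-divisor lemma and chain transitivity this is well defined, and concatenation of chains makes it a cocycle, $\rho_\delta(x,y)+\rho_\delta(y,w)=\rho_\delta(x,w)$. Since the one-step chain $(x,f(x))$ has length $1$ one gets $\rho_\delta(f(x),f(y))=\rho_\delta(x,y)$, and projecting an $(f\times f)$-$\delta$-chain onto each coordinate (two ordinary $\delta$-chains of the same length) shows $\rho_\delta$ is constant along $(f\times f)$-$\delta$-chains. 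By the definition of $\sim_\delta$, the relation $\sim_\delta$, viewed as a subset of $X\times X$, equals $\rho_\delta^{-1}(0)$, so $\sim\;=\bigcap_{\delta>0}\rho_\delta^{-1}(0)$. I will also use the following, proved exactly like the $k=1$ case in the proof of Theorem~\ref{thm:structure}: for every $\delta$ and every $x,y$, the lengths of $\delta$-chains from $x$ to $y$ are precisely the sufficiently large integers congruent to $\rho_\delta(x,y)$ modulo $k_\delta$ (apply Sylvester's formula to $\delta$-loops at $y$ whose lengths have greatest common divisor $k_\delta$).

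Next, let $D$ be the chain component of the diagonal, i.e.\ the set of $q\in X\times X$ for which, for every $\delta>0$, there are $(f\times f)$-$\delta$-chains from $q$ to some point of $\Delta$ and from some point of $\Delta$ to $q$. Since $f$ is chain transitive, $\Delta$ (conjugate to $(X,f)$) is a chain transitive set whose points are all chain recurrent, so $D$ is a genuine chain component: closed, $(f\times f)$-invariant, chain transitive, containing $\Delta$, and containing every chain transitive subset that meets $\Delta$. I claim $R_\pi=D$. If $q\in D$, a $\delta$-chain from a point $p\in\Delta$ to $q$ forces $\rho_\delta(q)=\rho_\delta(p)=0$, for every $\delta$, so $q\in\bigcap_\delta\rho_\delta^{-1}(0)=R_\pi$. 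Conversely, take $(x,y)\in R_\pi$, so $x\sim_\delta y$ for every $\delta$, and fix $\delta$. By the remark following the definition of $\sim$ there is $z$ with $\delta$-chains $x\to z$ and $y\to z$ of a common length, giving an $(f\times f)$-$\delta$-chain from $(x,y)$ to $(z,z)\in\Delta$. In the other direction, pick any $z'$; by the chain-length fact above together with $\rho_\delta(z',x)-\rho_\delta(z',y)=\rho_\delta(y,x)=-\rho_\delta(x,y)=0$, there are $\delta$-chains $z'\to x$ and $z'\to y$ of a common (large) length, giving an $(f\times f)$-$\delta$-chain from $(z',z')\in\Delta$ to $(x,y)$. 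Prepending and appending chains inside the chain transitive set $\Delta$ then puts $(x,y)$ in $D$.

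With $R_\pi=D$ established, $R_\pi$ is closed, invariant, and chain transitive. For maximality, if $S\supseteq R_\pi$ is closed, invariant, and chain transitive, then $S$ contains (hence meets) $\Delta$, so $S\subseteq D=R_\pi$, whence $S=R_\pi$. I expect the main obstacle to be the careful handling of the cocycle $\rho_\delta$---its well-definedness, the passage between product $\delta$-chains and their coordinate $\delta$-chains with the product metric, and the invariance of $\rho_\delta$ along product $\delta$-chains---together with quoting cleanly the standard properties of chain components; granting those, the identification $R_\pi=D$, and hence the proposition, is formal.
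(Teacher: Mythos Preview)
Your argument is correct, but it takes a genuinely different route from the paper's.  The paper proceeds directly: for chain transitivity it uses the $\ep$-chain mixing of $f^{k_\ep}$ on each $\sim_\ep$-class (established in Theorem~\ref{thm:structure}) to build an $(f\times f)$-$\ep$-chain between any two points of $R_\pi$, and for maximality it simply observes that any $(f\times f)$-$\ep$-chain starting in $\sim_\ep$ stays in $\sim_\ep$.  You instead package the $\sim_\delta$-structure into a cocycle $\rho_\delta$ and use it to identify $R_\pi$ with the chain component of the diagonal $\Delta$ for $f\times f$; closedness, invariance, chain transitivity, and maximality then all come from the general theory of chain components.  The maximality steps are really the same argument in different clothing---your ``$\rho_\delta$ is constant along $(f\times f)$-$\delta$-chains'' is exactly the paper's observation that $\ep$-chains preserve $\sim_\ep$.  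What your approach buys is a clean structural identification ($R_\pi$ is precisely the $(f\times f)$-chain component of the diagonal), at the cost of quoting the standard but nontrivial fact that chain components are internally chain transitive; the paper's approach is more self-contained and hooks directly into the $\sim_\ep$-decomposition already in hand.
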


\begin{proof}
It is clearly closed and invariant.  To see that it is chain transitive, fix $\ep>0$ and points $x_0,y_0,x_1,y_1\in X$ with $x_{0}\sim y_{0}$ and $x_{1}\sim y_{1}$.  We must show that there is an $\ep$-chain for $f\times f$ from $(x_0,y_0)$ to $(x_1,y_1)$.  Let $\tilde K_0$ be the $\sim_\ep$-equivalence class containing $x_0$ and $y_0$, and $\tilde K_1$ be the $\sim_\ep$-equivalence class containing $x_1$ and $y_1$.  There is an $n$ such that $f^n$ maps $K_0$ onto $K_1$.  Since $f^{k_\ep}$ restricted to $\tilde K_0$ is chain mixing, there is an $\ep$-chain for $f\times f$ from $(x_0,y_0)$ to $(f^{-n}(x_1), f^{-n}(y_1)$, which we can extend by an actual orbit to  $(x_1,y_1)$.

To see that $\sim$ is a maximal chain transitive subset, observe that if $x_0\sim_\ep y_0$ and $(x_0,y_0), (x_1,y_1),\dots,(x_m,y_m)$ is an $\ep$-chain, then $x_i\sim_\ep y_i$ for all $i$.  Thus there cannot be an $\ep$-chain from $(x_0,y_0)$ to $(x,y)$ unless $x\sim_\ep y$.

\end{proof}
	
We conclude this section with a few results from the literature that are similar to Theorem \ref{thm:structure}.  However, instead of looking at the induced map on the space of equivalence classes $\tilde K$, these relate to the induced map on $K$, the set of connected components of $X$.  See  \cite{BS,HH,MDG} for details and more general results. The first is a  ``folklore'' result similar to Theorem \ref{thm:structure} for topologically transitive maps.

\begin{thm}[\cite{BS}]
Let $X$ be a compact metric space, $f:X\to X$ be topologically transitive map, and $\bar f:K\to K$ be the induced map on the connected components of $X$.  Then either
\begin{enumerate}
\item $K$ is finite and $\bar f$ is a cyclic permutation, or
\item $K$ is a Cantor set and $\bar f$ is topologically transitive.
\end{enumerate}
\end{thm}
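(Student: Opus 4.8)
The plan is to exhibit $(K,\bar f)$ as a topological factor of $(X,f)$ and then split into two cases according to whether the compact space $K$ has an isolated point. First I would record the (standard) topological preliminaries. Since $X$ is compact metric, its decomposition into connected components agrees with its decomposition into quasi-components, so $K$ (with the quotient topology) is a nonempty compact, Hausdorff, totally disconnected space; it is metrizable because $X$, being compact metric, has only countably many clopen subsets, whence $K$ is second countable. Because a continuous image of a connected set is connected, $f$ maps each component of $X$ into a single component, hence descends to a map $\bar f:K\to K$, which is continuous since the projection $\pi:X\to K$ is a quotient map satisfying $\pi\circ f=\bar f\circ\pi$. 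Thus $\pi$ is a factor map, and topological transitivity passes to factors — given nonempty open $U,V\subseteq K$, pull them back to nonempty open subsets of $X$, apply transitivity of $f$, and push the resulting point forward by $\pi$ — so $\bar f$ is topologically transitive.

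I would then reduce the theorem to the following lemma: \emph{if $g:Y\to Y$ is a topologically transitive continuous map of a compact metric space $Y$ with an isolated point, then $Y$ is finite and $g$ is a cyclic permutation.} Granting this, the theorem is immediate. If $K$ has an isolated point, the lemma gives conclusion (1). If $K$ has no isolated point, then $K$ is a nonempty perfect compact metrizable totally disconnected space, hence homeomorphic to the Cantor set by Brouwer's characterization, and we have already seen that $\bar f$ is topologically transitive; this is conclusion (2).

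For the lemma I would argue as follows. The one-point case being trivial, assume $Y$ has at least two points and let $p$ be an isolated point, so $\{p\}$ is clopen. Two density statements drive the proof. Using $\{p\}$ as the source set in the definition of transitivity (and noting that for an open set disjoint from $\{p\}$ the witnessing iterate must be positive) shows that the forward orbit $O^+(p)=\{g^n(p):n\ge0\}$ is dense in $Y$. Setting $E=\{x\in Y:g^k(x)=p\text{ for some }k\ge0\}$, the set $E$ is open, being a union of preimages of the open set $\{p\}$ under iterates of $g$, and using $\{p\}$ as the target set in the definition of transitivity shows that $E$ is dense in $Y$. Now suppose $p$ were not periodic. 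Since $O^+(p)$ is dense and $\{p\}$ is open, $\{g^n(p):n\ge1\}$ is dense in the clopen set $Y\setminus\{p\}$; but it is disjoint from $E$, since $g^n(p)\in E$ with $n\ge1$ would force $p$ to be periodic. As $Y\setminus E$ is closed, this gives $Y\setminus\{p\}\subseteq Y\setminus E$, i.e.\ $E\subseteq\{p\}$, contradicting the density of $E$. Hence $p$ is periodic, its orbit is a finite — hence closed — dense subset of $Y$, so it equals $Y$, and $g$ cyclically permutes this orbit.

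The topological facts about $K$ (coincidence of components with quasi-components in the compact setting, metrizability and total disconnectedness of the quotient) are standard, so I expect the lemma to be the real crux. The only subtlety there is to use topological transitivity in its equivalent form ``for disjoint nonempty open $U,V$ there is an $n\ge1$ with $g^n(U)\cap V\ne\emptyset$,'' which is exactly what makes the isolated point $p$ both forward-reachable from, and backward-reachable to, the rest of $Y$ and thereby forces periodicity.
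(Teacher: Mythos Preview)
The paper does not actually prove this theorem: it is quoted from \cite{BS} as a ``folklore'' result, with the reader referred to \cite{BS,HH,MDG} for details. So there is no in-paper argument to compare against.

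Your proof is correct. The structure---realize $(K,\bar f)$ as a factor of $(X,f)$, note that $K$ is compact metrizable totally disconnected, observe that transitivity descends to factors, and then split on whether $K$ has an isolated point---is the natural one, and your key lemma (a transitive self-map of a compact metric space with an isolated point is a cyclic permutation of a finite set) is proved cleanly. The periodicity argument, pitting the dense forward orbit of the isolated point against the dense open set of its preimages, is exactly the right idea. Two small remarks: (i) your metrizability argument for $K$ is fine but roundabout---it is quicker to invoke the standard fact that a Hausdorff quotient of a compact metric space is metrizable; your route (``$X$ has only countably many clopen sets'') is correct once one notes that a clopen set in a compact space with a countable base is a \emph{finite} union of basic open sets contained in it, and that the clopen sets of the Stone space $K$ form a base; (ii) the ``equivalent form'' of transitivity you flag at the end is not really an extra hypothesis here: with the paper's definition (some $n\ge0$), the case $n=0$ is automatically excluded whenever $U$ and $V$ are disjoint, which is the only situation where you need $n\ge1$.
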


Notice that under these hypotheses if $x$ and $y$ are in the same connected component, then necessarily $x\sim y$.  Thus $\bar f:K\to K$ factors onto $\tilde f:\tilde K\to \tilde K$.  If $X$ has only finitely many connected components, then the two decompositions and the two maps are the same.

\begin{defn}  Let $A\subset X$ be a compact invariant set.
\begin{enumerate}
\item $A$ is topologically transitive (resp.\ chain transitive) if $f|_A:A\to A$ is topologically transitive (resp.\ chain transitive).
\item $A$ is \emph{Liapunov stable} if every neighborhood $U$ of $A$ contains a neighborhood $V$ of $A$ such that $f^n(V)\subset U$ for all $n\ge0$.
\item $A$ is an \emph{attractor} if there exists a neighborhood $U$ of $A$ such that $\cl(f(U))\subset\Int U$ and $A=\bigcap_{n\ge0}f^n(U)$.
\end{enumerate}
\end{defn}

\begin{thm}[\cite{BS,HH}]
Let $X$ be a locally compact, locally connected metric space and $f:X\to X$ be a continuous map.  Let $A\subset X$ be a compact, Liapunov stable transitive set, and $\bar f:K\to K$ be the induced map on the connected components of $A$.  Then either
\begin{enumerate}
\item $K$ is finite and $\bar f$ is a cyclic permutation, or
\item $K$ is a Cantor set and $\bar f$ is topologically conjugate to an adding machine map.
\end{enumerate}
\end{thm}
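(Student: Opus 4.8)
The plan is to pass to the induced system $(K,\bar f)$, where $K$ is the space of connected components of $A$ --- a compact, totally disconnected, metrizable space, since $A$ is a compact metric space --- and $\bar f$ is the induced map, which is topologically transitive (being a factor of the transitive map $f|_{A}$). The argument then splits according to whether $K$ has an isolated point. If it does, then $(K,\bar f)$, being a topologically transitive compact system with an isolated point, is a single periodic orbit cyclically permuted by $\bar f$, which is alternative (1). Otherwise $K$ is compact, perfect and totally disconnected, hence a Cantor set, and the problem reduces to proving that $\bar f$ is \emph{equicontinuous}. Indeed, once this is known $\bar f$ is necessarily a homeomorphism of $K$, and, being also transitive, it is minimal; by the structure theorem for minimal equicontinuous systems it is then conjugate to a rotation by a topological generator on a compact monothetic group $G$. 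Since $K$, and therefore $G$, is totally disconnected, $G$ is profinite, and a monothetic profinite group is procyclic, so $G\cong\varprojlim_{i}\Z/m_{i}\Z$ with $m_{i}\mid m_{i+1}$ and the generating rotation is the inverse limit of the maps $n\mapsto n+1$; by Lemma~\ref{lem:eqdefn} this is topologically conjugate to an adding machine map, which is alternative (2).

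To prove equicontinuity, fix $\ep>0$, and for $\eta>0$ put $U_{\eta}=\{x\in X:d(x,A)<\eta\}$; since $A$ is compact these sets form a neighborhood basis of $A$. Because $X$ is locally connected, the connected components of $U_{\eta}$ are open; only finitely many of them meet the compact set $A$, and each connected component of $A$ lies in exactly one of them, so the intersections with $A$ of these finitely many components form a finite clopen partition of $A$ whose cells are unions of components of $A$. Projecting to $K$, we obtain a finite clopen partition $\mathcal Q_{\eta}$ of $K$. A compactness argument shows that the mesh of $\mathcal Q_{\eta}$ (the largest diameter of a cell, measured in $K$) tends to $0$ as $\eta\to0$: if, for arbitrarily small $\eta$, two points $x_{\eta},y_{\eta}\in A$ lay in a common connected component of $U_{\eta}$ while their images in $K$ stayed a distance $\ge\ep$ apart, then passing to subsequential limits $x,y\in A$ we would get two points in distinct components of $A$ --- hence, since components and quasicomponents agree in the compact set $A$, separated by a set clopen in $A$, and therefore by disjoint open subsets $G_{1},G_{2}$ of $X$; but then $U_{\eta}\subseteq G_{1}\cup G_{2}$ for all small $\eta$, so no connected subset of $U_{\eta}$ could contain both $x_{\eta}\in G_{1}$ and $y_{\eta}\in G_{2}$, a contradiction. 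Fix $\eta$ so that every cell of $\mathcal Q_{\eta}$ has diameter $<\ep$.

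At this point Liapunov stability enters: choose a neighborhood $V$ of $A$ with $V\subseteq U_{\eta}$ and $f^{n}(V)\subseteq U_{\eta}$ for all $n\ge0$, and then an $\eta'$ with $U_{\eta'}\subseteq V$. For any connected component $O'$ of $U_{\eta'}$ and any $n\ge0$, the set $f^{n}(O')$ is connected and contained in $f^{n}(V)\subseteq U_{\eta}$, hence lies in a single component of $U_{\eta}$; consequently every cell of $\mathcal Q_{\eta'}$ is carried by every iterate $\bar f^{n}$ into a single cell of $\mathcal Q_{\eta}$. Letting $\delta>0$ be the least distance between distinct cells of the finite partition $\mathcal Q_{\eta'}$, any $x,y\in K$ with $d_{K}(x,y)<\delta$ must lie in one cell of $\mathcal Q_{\eta'}$, so for every $n$ the points $\bar f^{n}x$ and $\bar f^{n}y$ lie in one cell of $\mathcal Q_{\eta}$ and hence satisfy $d_{K}(\bar f^{n}x,\bar f^{n}y)<\ep$. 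Thus $\bar f$ is equicontinuous, as required.

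The main obstacle is exactly this equicontinuity step, and it is there that all the hypotheses are used: local connectedness is what promotes the components of a neighborhood of $A$ to honest clopen partitions of $K$, compactness of $A$ (together with local compactness, if one prefers to work with relatively compact neighborhoods) drives the vanishing-mesh argument, and Liapunov stability is precisely what forces nearby components of $A$ to remain in nearby components under all iterates. The remaining ingredients --- the isolated-point dichotomy for transitive compact systems, the structure theorem for minimal equicontinuous systems, and the recognition via Lemma~\ref{lem:eqdefn} of a generator-rotation on a procyclic group as an adding machine --- are standard. The overall architecture parallels that of Theorem~\ref{thm:structure}, with connected components and topological transitivity plus Liapunov stability playing the roles of the classes of $\sim_{\ep}$ and chain transitivity.
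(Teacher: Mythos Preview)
The paper does not prove this theorem; it is quoted from \cite{BS,HH} with the pointer ``See \cite{BS,HH,MDG} for details and more general results,'' and no argument is supplied. There is therefore no proof in the paper to compare yours against.

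For what it is worth, your argument is essentially the one in Buescu--Stewart: pass to the component space, reduce to showing that $\bar f$ is equicontinuous, and obtain equicontinuity by using local connectedness of $X$ to manufacture finite clopen partitions of $K$ from the components of small neighborhoods of $A$, with Liapunov stability guaranteeing that every iterate carries each cell of a sufficiently fine such partition into a single cell of a fixed one. Two spots could use one more sentence: the passage from ``equicontinuous and transitive'' to ``homeomorphism'' (surjectivity follows from transitivity on a perfect compact space, and then the Ellis semigroup of an equicontinuous surjection is a compact group of homeomorphisms), and the vanishing-mesh step, which reads more cleanly as ``the partitions $\mathcal Q_\eta$ separate points of $K$'' rather than through an unspecified metric on $K$. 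Your closing analogy with Theorem~\ref{thm:structure} is apt.
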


\begin{thm}[\cite{HH}]\label{thm:HH}
Let $X$ be a locally compact, locally connected metric space and $f:X\to X$ be a continuous map.   Let $A\subset X$ be a compact, chain transitive attractor.  Then $\bar f:K\to K$, the induced map on the connected components of $A$, is a cyclic permutation of a finite set.
\end{thm}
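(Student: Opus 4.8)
The plan is to prove directly that $\bar f$ is a cyclic permutation of a finite set, exploiting the interplay between the ``trapping'' structure of an attractor and the combinatorial rigidity forced by chain transitivity. (One could also deduce the dichotomy from the preceding theorem, since attractors are Liapunov stable, and then use only the argument below to rule out the Cantor/adding machine alternative.)

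First I would arrange a convenient attracting neighborhood. Since $X$ is locally compact and $A$ is compact, by a standard reduction we may take the attracting neighborhood $U$ of $A$ to be relatively compact, so that $\cl(f(U))$ is a compact subset of $\Int U$; this also makes $A=\bigcap_n f^n(\cl U)$ and yields $f(A)=A$. As $X$ is locally connected, the connected components of $U$ are open, and the compact set $\cl(f(U))$ meets only finitely many of them; replacing $U$ by the union of those components still gives a relatively compact attracting neighborhood with $A=\bigcap_n f^n(U)$. Finally I would discard any component disjoint from $A$, which is legitimate because $f(A)=A$ (so $A=f^n(A)\subseteq f^n(U)$ for the surviving neighborhood). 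After these reductions $U=U_1\sqcup\cdots\sqcup U_m$, with each $U_i$ open, connected, relatively compact and meeting $A$; moreover $f$ induces a self-map $\sigma$ of $\{1,\dots,m\}$ with $f(U_i)\subseteq\cl(f(U_i))\subseteq U_{\sigma(i)}$, and, writing $A_i=A\cap U_i$, each $A_i$ is nonempty, clopen in $A$, and satisfies $f(A_i)\subseteq A_{\sigma(i)}$.

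Now chain transitivity enters twice. The finitely many disjoint compact sets $A_i$ are $\delta_0$-separated for some $\delta_0>0$, so when $\ep<\delta_0$ every $(\ep,f|_A)$-chain that starts in $A_i$ must pass in order through $A_{\sigma(i)},A_{\sigma^2(i)},\dots$; hence a chain joining a point of $A_i$ to a point of $A_j$ of length $n$ forces $\sigma^n(i)=j$. Since $f|_A$ is chain transitive, every index is thus reachable from every index under $\sigma$, and a self-map of a finite set with that property is a single cycle, so $\sigma$ is a cyclic permutation of order $m$. Because $\sigma^m$ is the identity, iterating $\cl(f(U_i))\subseteq U_{\sigma(i)}$ gives $f^m(\cl U_i)\subseteq U_i$, so the sets $f^{mn}(\cl U_i)$ ($n\ge1$) are non-empty, compact, connected and nested, and because for $n\ge1$ the pieces $f^{mn}(\cl U_1),\dots,f^{mn}(\cl U_m)$ are pairwise disjoint (each lying in its own $U_i$), one gets $A_i=A\cap U_i=\left(\bigcap_n f^n(\cl U)\right)\cap U_i=\bigcap_{n\ge1}f^{mn}(\cl U_i)$. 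A nested intersection of continua is a continuum, so each $A_i$ is connected; being also clopen in $A$, the $A_i$ are precisely the connected components of $A$. Therefore $K=\{A_1,\dots,A_m\}$ is finite and $\bar f=\sigma$ is a cyclic permutation.

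The step I expect to be the main obstacle is the point-set bookkeeping in the reduction: passing to a relatively compact attracting neighborhood, cutting it down to finitely many components each of which meets $A$ while preserving $A=\bigcap_n f^n(U)$, and then correctly recognizing $A_i$ as the nested intersection $\bigcap_{n\ge1}f^{mn}(\cl U_i)$ (which hinges on knowing that the iterates $f^{mn}(\cl U)$, $n\ge1$, split as a disjoint union over the $U_i$). Once that scaffolding is in place, the forcing argument that $\sigma$ is a cycle and the continuum argument that each $A_i$ is connected are both short.
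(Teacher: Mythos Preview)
The paper does not prove this theorem; it is quoted from Hirsch--Hurley \cite{HH} (along with the two neighboring results) with the remark ``See \cite{BS,HH,MDG} for details and more general results,'' so there is no in-paper proof to compare against.

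Your argument is correct and is essentially the standard attractor-neighborhood approach one finds in \cite{HH}: shrink to a relatively compact trapping neighborhood, use local connectedness to get finitely many open connected pieces $U_i$, read off a self-map $\sigma$ of the index set from $\cl(f(U_i))\subset U_{\sigma(i)}$, use chain transitivity of $f|_A$ (with $\ep$ below the separation of the $A_i$) to force $\sigma$ to be a single cycle, and finally identify each $A_i$ with the nested intersection $\bigcap_{n\ge1} f^{mn}(\cl U_i)$ of continua. The bookkeeping you flag as the main obstacle is handled correctly: after discarding the $U_i$ that miss $A$, the set $U'$ is still a trapping neighborhood because $f(A)=A$ forces $\sigma$ to carry retained indices to retained indices, and your identification $A_i=\bigcap_{n\ge1} f^{mn}(\cl U_i)$ follows since for $n\ge1$ the pieces $f^n(\cl U_j)$ sit in the pairwise disjoint $U_{\sigma^n(j)}$ and the full nested intersection agrees with the intersection along the subsequence $n\in m\Z$. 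One small clarification worth adding: $\cl U_i$ is connected (closure of a connected set) and compact (closed in the compact $\cl U$), so each $f^{mn}(\cl U_i)$ is indeed a continuum; the classical fact that a decreasing intersection of continua is a continuum then gives connectedness of $A_i$.
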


Theorems \ref{thm:structure} and \ref{thm:HH} imply the following result.

\begin{cor}
Let $X$ be a locally compact, locally connected metric space and $f:X\to X$ be a continuous map.   Let $A\subset X$ be a compact, chain transitive attractor.  Then there is a $k$ such that $A$ decomposes 
into $k$ closed and open equivalence classes, $f$ permutes the equivalence classes cyclically, and $f^k$ restricted to each equivalence class is chain mixing.

\end{cor}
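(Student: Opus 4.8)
The plan is to apply Theorem~\ref{thm:structure} to the chain transitive map $f|_A:A\to A$ and to use Theorem~\ref{thm:HH} to eliminate the adding-machine alternative. Since $A$ is compact and invariant and $f|_A$ is chain transitive, Theorem~\ref{thm:structure} (with $A$ in place of $X$) tells us that either conclusion (1) holds for $f|_A$ --- which is precisely the assertion of the corollary --- or else $f|_A$ factors onto an adding machine map, in which case the quotient $\tilde K = A/\!\sim$ is a Cantor set. So it suffices to rule out the latter.

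To do this, I would first observe that each connected component $C$ of $A$ lies in a single $\sim$-equivalence class. Indeed, fix $\ep>0$; by the Lemma, the relation $\sim_\ep$ is open and closed, so each of its (finitely many, namely $k_\ep$) equivalence classes is a clopen subset of $A$. Intersecting these classes with $C$ writes the connected set $C$ as a disjoint union of relatively clopen subsets, so $C$ is contained in a single $\sim_\ep$-class. As this holds for every $\ep>0$, $C$ lies in a single $\sim$-class. Consequently the quotient map $A\to\tilde K$ factors through the space $K$ of connected components of $A$: the induced map $\bar f:K\to K$ factors onto $\tilde f:\tilde K\to\tilde K$, and in particular $\tilde K$ is a continuous image of $K$.

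Now I would invoke Theorem~\ref{thm:HH}: since $X$ is locally compact and locally connected and $A$ is a compact chain transitive attractor, $\bar f:K\to K$ is a cyclic permutation of a finite set, so $K$ is finite. Hence $\tilde K$, being a continuous surjective image of the finite set $K$, is finite, and therefore is not a Cantor set. Thus the adding-machine alternative in Theorem~\ref{thm:structure} is impossible, so conclusion (1) of that theorem holds for $f|_A$: there is a $k$ such that $A$ decomposes into $k$ closed and open $\sim$-equivalence classes, $f$ permutes them cyclically, and $f^k$ is chain mixing on each, which is exactly what the corollary claims.

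The only step that requires any care is the middle one --- checking that connected components cannot be separated by $\sim$ --- and this follows at once from the fact, recorded in the Lemma, that $\sim_\ep$ is a clopen equivalence relation with finitely many classes; everything else is a direct appeal to Theorems~\ref{thm:structure} and~\ref{thm:HH}. (Alternatively one could cite the remark following the ``folklore'' theorem, which already notes that when $A$ has finitely many components the decomposition into components and the decomposition by $\sim$ coincide.)
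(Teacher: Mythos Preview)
Your proof is correct and is exactly the argument the paper has in mind: the paper simply states that the corollary follows from Theorems~\ref{thm:structure} and~\ref{thm:HH}, and you have spelled out the details (including the observation, already noted in the paper after the ``folklore'' theorem, that connected components lie in single $\sim$-classes, so $\bar f:K\to K$ factors onto $\tilde f:\tilde K\to\tilde K$). Nothing more is needed.
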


It is worth remarking here that on many compact metric spaces, including manifolds, homeomorphisms generically have no chain transitive attractors.  See \cite{AHK,Hur}.



We conclude this section with an example.  

\begin{ex}
Construct a Denjoy map $f:S^1\to S^1$ by starting with an irrational rotation $R_\alpha$ and replacing $\{R_\alpha^n(0)\}_{n\in\Z}$, the orbit of 0, with a sequence of intervals $\{I_n\}$ such that $\sum_{n\in\Z}\operatorname{length}(I_n)=1$ and $I_n$ is mapped into $I_{n+1}$ by an orientation-preserving homeomorphism (see \cite[\S2.8]{R} for details).  

So defined, $f$ is chain mixing, but not topologically transitive.  In fact, the omega limit set of any $x\in S^1$ is the invariant Cantor set $C=S^1\backslash\bigcup_{n\in\Z}\Int(I_n)$.

If we restrict the map to $C$, then $f|_C$ is still chain mixing and $f|_C$ is topologically transitive.  Thus the space $K$ of connected components of $C$ is $C$ itself, while $\tilde K$ is a single point.
\end{ex}

\section{Estimating recurrence rates}
\label{sec:estimates}

Let $f:X\to X$ be a chain transitive map.  We wish to estimate the chain recurrence time $\crt$ and, in the case that $f$ is chain mixing, the chain mixing time $\mt(\delta)$ for a given $\ep$.  Our technique is to model $\ep$-chains with a subshift of finite type, an idea which has been used before in some other contexts (\cite{A,H,KMV,OC}, for example).  The approach we use here is adapted from that in chapter 5 of \cite{A}.  For calculations of $\crt$ and $\mt(\delta)$ for a variety of dynamical systems, see section \ref{sec:examples}.

\begin{prop}
Let $d'$ be the upper box dimension of $X$ (see \cite[\S6]{P} for a definition of upper box dimension).  There exists a constant $C>0$ such that for small enough $\ep$:
\begin{enumerate}
\item if $f:X\to X$ is chain transitive, then $\crt \le C/\ep^{d'}$.
\item if $f:X\to X$ is chain mixing, then $\mt(\delta)\le  C/\ep^{2d'}$.
\end{enumerate}
\end{prop}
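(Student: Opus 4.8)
The plan is to build a subshift of finite type that models $\ep$-chains and then use the standard fact that in an irreducible (respectively aperiodic irreducible) $0$-$1$ matrix of size $m\times m$, any two states are connected by a path of length at most $m$ (respectively, all sufficiently long paths, of length roughly $m^2$, exist). First I would fix a small $\ep>0$ and choose a finite $\ep/3$-net $\{p_1,\dots,p_m\}$ of $X$; by the definition of upper box dimension, $m\le C_0/\ep^{d'}$ for $\ep$ small (after absorbing a constant and a slightly larger exponent, or working with $d'+\eta$ and noting the constant can be chosen to handle it — in fact it is cleanest to state the bound as $C/\ep^{d'}$ with $C$ depending on the net constant). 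Define a directed graph $G_\ep$ on vertices $\{1,\dots,m\}$ with an edge $i\to j$ whenever $d(f(p_i),p_j)\le\ep/3$. The two routine lemmas to record are: (a) if there is an $\ep/3$-chain in $X$ realized through net points, it gives a path in $G_\ep$ of the same length; and (b) conversely, a path $i_0\to i_1\to\cdots\to i_n$ in $G_\ep$ yields an $\ep$-chain $(p_{i_0},\dots,p_{i_n})$ in $X$, since $d(f(p_{i_{k-1}}),p_{i_k})\le\ep/3\le\ep$. One also needs that $G_\ep$ is irreducible: given vertices $i,j$, chain transitivity gives an $\ep/3$-chain from $p_i$ to $p_j$, and rounding each interior point to its nearest net point (which moves it by at most $\ep/3$, so the accumulated error at each step is at most $\ep/3+\ep/3+\ep/3$... here one must be slightly careful and instead start from an $\ep/9$-chain so that rounding still produces edges of $G_\ep$) produces a path in $G_\ep$ from $i$ to $j$.

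For part (1): since $G_\ep$ is irreducible on $m$ vertices, for every vertex $i$ there is a cycle through $i$ of length at most $m$ (go from $i$ to any successor and back, or just use that the strongly connected component containing $i$ has a cycle of length $\le m$ through $i$). Via lemma (b) this produces, for every net point $p_i$, an $\ep$-chain from $p_i$ to itself of length at most $m\le C/\ep^{d'}$. To get from net points to arbitrary $x\in X$, pick $p_i$ with $d(x,p_i)\le\ep/3$; prepend $x$ and replace the final $p_i$ by $x$ in the cycle — more precisely, take the cycle $(p_i=q_0,q_1,\dots,q_n=p_i)$ and form $(x,q_1,\dots,q_{n-1},x)$, noting $d(f(x),q_1)\le d(f(x),f(p_i))+d(f(p_i),q_1)$ is controlled if we chose $p_i$ with $d(x,p_i)$ small enough that $d(f(x),f(p_i))\le\ep/3$ — this requires the net to be fine enough relative to the modulus of continuity of $f$, which only changes the constant $C$, not the exponent. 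Hence $\crt(x)\le C/\ep^{d'}$ for all $x$, giving (1).

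For part (2): now $f$ is chain mixing, so (after choosing $\ep$ small and the net fine) the graph $G_\ep$ is primitive (aperiodic irreducible): the gcd of its cycle lengths is $1$, because an $\ep'$-chain argument combined with chain mixing produces loops of two coprime lengths through some vertex, and concatenation spreads aperiodicity to all vertices of the (single) strong component. The standard bound for primitive $0$-$1$ matrices (Wielandt's bound, or just the elementary Sylvester/Frobenius argument already used in the proof of Theorem~\ref{thm:structure}) says every pair of vertices is joined by walks of every length $\ge N$ for some $N\le m^2$ (more precisely $\le (m-1)^2+1$, but $m^2$ suffices). Translating through lemma (b): for every $n\ge m^2$ and every pair of net points there is an $\ep$-chain of length exactly $n$ between them; absorbing the passage from $\delta$-balls and arbitrary points $y$ to net points (which costs a bounded additive amount, hidden in the constant) gives $\mt(\delta)\le C m^2\le C/\ep^{2d'}$.

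The main obstacle I expect is purely bookkeeping: getting all the $\ep$-constants to line up so that (i) rounding interior points of a chain to the net still produces \emph{edges} of $G_\ep$ (handled by starting from an $\ep/c$-chain for a fixed constant $c$, using chain transitivity/mixing at scale $\ep/c$), (ii) swapping a net endpoint for a nearby arbitrary point $x$ or $y$ still yields an $\ep$-chain (handled by uniform continuity of $f$, which forces the net to be fine by a fixed factor depending on $\ep$ — this is where one must check that the factor does not blow up the box-dimension count, i.e. that $N_{\ep/c}(X)\le C' N_\ep(X)$ up to constants, which follows from the definition of box dimension since the exponent is unchanged and only the multiplicative constant $C$ is affected), and (iii) the transition from "walks of length $\ge N$ exist for every ordered pair" to the covering statement $(B_\ep\circ f)^N(B_\delta(x))=X$. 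None of these is deep, but the cleanest exposition fixes one auxiliary scale $\ep'=\ep/9$ (or similar) at the outset, builds everything from $\ep'$-chains, and checks once and for all that $\ep'$-rounded objects are genuine $\ep$-objects. The one genuinely non-formal input is the primitive-matrix path-length bound $\le m^2$, which is classical and for which we can cite the same Sylvester-type reasoning already invoked for Theorem~\ref{thm:structure}, applied vertexwise and then globally via irreducibility.
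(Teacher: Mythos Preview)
Your proposal is correct and follows essentially the same approach as the paper: discretize $X$ at scale comparable to $\ep$, build a transition graph modeling $\ep$-chains, observe that chain transitivity (resp.\ chain mixing) makes the graph irreducible (resp.\ primitive), and invoke the elementary path bound $\le m$ (resp.\ the Holladay--Varga/Wielandt bound $\le (m-1)^2+1$) on an $m$-vertex graph.  The paper uses a minimal cover by $\ep/2$-balls with the intersection condition $f(X_j)\cap X_i\ne\emptyset$ in place of your $\ep/3$-net with the distance condition, and it handles the converse direction (actual chains $\Rightarrow$ allowable words) via the Lebesgue number of the cover rather than your rounding-at-scale-$\ep/9$ maneuver; this cleans up exactly the bookkeeping you flag as the main obstacle, but the content is identical.
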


\begin{proof}
Let $\{X_i\}_{i=1}^{\nspan(\frac\ep2)}$ be a minimal cardinality collection of balls of radius $\frac\ep2$ whose
 interiors cover $X$.  Observe that for small enough $\ep$, $\nspan(\frac\ep2)+1\le C/\ep^{d'}$ for some constant  $C$, where $d'$ is the upper box dimension of $X$ (\cite[\S6]{P}).  Construct the $\nspan(\frac\ep2)\times \nspan(\frac\ep2)$ transition matrix $A_\ep$ by setting $a_{ij}=1$ if $f(X_j)\cap X_i \ne \emptyset$ and 0 otherwise.  If $(i_0,\dots,i_k)\in\{1,\dots,\nspan(\frac\ep2)\}^{k+1}$ is an allowable word in the subshift of finite type determined by $A_\ep$, then for any $x\in f^{-1}(X_{i_0})$ and any $x_k\in X_{i_k}$, there is an $\ep$-chain $(x, x_0,\dots,x_k)$ with $x_j\in X_{i_j}$ for each $j$.  Conversely (see Lemma 5.1 of \cite{A}), if $(x_0,\dots,x_k)$ is an $\alpha$-chain, where $\alpha$ is less than the Lebesgue number of the cover $\{X_i\}$ and $x_j\in X_{i_j}$, then the word $(i_0,\dots,i_k)$ is an allowable word.  Thus, because $f$ is chain transitive, for any $i$ and $j$, there is an allowable word starting at $i$ and ending at $j$.  By removing any subwords that begin and end at the same symbol, we can assume that the word has length less than or equal to $\nspan(\frac\ep2)$, and so for any two points $x$ and $y$ there is an $\ep$-chain from $x$ to $y$ of length less than or equal to $\nspan(\frac\ep2)+1\le C/\ep^{d'}$ for small enough $\ep$.  This proves the first part of the result.
 
To prove the second part, observe that since $f$ is chain mixing, the matrix $A_\ep$ is eventually positive.  So if $p$ is the smallest integer such that each entry of $A_\ep^p$ is strictly positive, then $\mt(\delta)\le p+1$.   
Since $p\le (\nspan(\frac\ep2)-1)^2+1$  (\cite{HV}), we have that $\mt(\delta) \le  C/\ep^{2d'}$ for small enough $\ep$.

\end{proof}

We also have a lower bound for $\mt(\delta)$ in the case that $f$ is Lipschitz.  We know that the Lipschitz constant $\lip$ is at least 1, since otherwise the Banach fixed point theorem says that $f$ has an attracting fixed point, contradicting the fact that $f$ is chain mixing.

\begin{prop}\label{prop:LipEst} 
Let $f:X\to X$ be chain mixing and have Lipschitz constant $\lip$.  Let $D$ be the diameter of $X$.  Then for $\delta$ sufficiently small,  $\ds\mt(\delta)\ge \log_{\lip}\left(\frac{D(\lip-1)+2\ep}{2\delta(\lip-1)+2\ep}\right)$ if $\lip>1$, and $\ds\mt(\delta)\ge \frac{D-2\delta}{2\ep}$ if $\lip=1$.
\end{prop}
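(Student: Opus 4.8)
The plan is to track the diameter of the iterated sets $(B_\ep\circ f)^n(B_\delta(x))$ and to observe that it cannot grow to $D=\diam(X)$ until $n$ is large. Fix a point $x\in X$ and set $S_n=(B_\ep\circ f)^n(B_\delta(x))$. By the parenthetical reformulation of the chain mixing time in section~\ref{sec:defns} we have $S_{\mt(x,\delta)}=X$, and since $\mt(\delta)=\max_x\mt(x,\delta)$ it suffices to prove that, for every $x$, the equality $S_n=X$ forces $n$ to be at least the quantity in the statement.

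The key step is the one-step estimate $\diam(S_{n+1})\le\lip\cdot\diam(S_n)+2\ep$. This follows from two elementary observations: $\diam(f(T))\le\lip\,\diam(T)$ because $f$ is $\lip$-Lipschitz, and $\diam(B_\ep(T))\le\diam(T)+2\ep$ by the triangle inequality; apply the first with $T=S_n$ and then the second with $T=f(S_n)$. Together with the base case $\diam(S_0)\le\diam(B_\delta(x))\le 2\delta$, an induction solving the linear recursion gives, when $\lip>1$,
\[
\diam(S_n)\;\le\;\lip^{\,n}\Bigl(2\delta+\frac{2\ep}{\lip-1}\Bigr)-\frac{2\ep}{\lip-1},
\]
and, when $\lip=1$, $\diam(S_n)\le 2\delta+2n\ep$.

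Now suppose $S_n=X$, so $\diam(S_n)=D$. For $\lip>1$ the displayed bound rearranges to $\lip^{\,n}\bigl(2\delta(\lip-1)+2\ep\bigr)\ge D(\lip-1)+2\ep$, that is, $n\ge\log_{\lip}\bigl(\frac{D(\lip-1)+2\ep}{2\delta(\lip-1)+2\ep}\bigr)$; for $\lip=1$ it gives $n\ge\frac{D-2\delta}{2\ep}$. Taking $n=\mt(x,\delta)$ and then the maximum over $x$ yields both inequalities. The only role of the hypothesis that $\delta$ be sufficiently small (beyond the standing requirement $\ep<\delta$ in the definition of $\mt$) is to ensure $\delta<D/2$, so that the right-hand sides are positive and the bound is non-vacuous. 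I do not expect a real obstacle here: the points requiring care are the bookkeeping in solving the recursion and recasting it in the stated closed form, and the harmless fact that $\diam(B_\delta(x))$ may be strictly smaller than $2\delta$, which only strengthens the estimate.
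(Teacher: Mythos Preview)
Your proof is correct and is essentially the same argument as the paper's: both track $\diam\bigl((B_\ep\circ f)^n(B_\delta(x))\bigr)$ via the one-step Lipschitz-plus-$\ep$-thickening estimate and solve the resulting linear recursion. Your closed form $\lip^{\,n}\bigl(2\delta+\tfrac{2\ep}{\lip-1}\bigr)-\tfrac{2\ep}{\lip-1}$ is algebraically identical to the paper's $\lip^{\,n}(2\delta)+\tfrac{1-\lip^{\,n}}{1-\lip}(2\ep)$, and the rearrangement is correct; you are also slightly more careful than the paper in noting why ``$\delta$ sufficiently small'' is needed.
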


\begin{proof}
  The diameter of the $n$th iterate of a ball of radius $\delta$ under $B_\ep\circ f$ is
\[\diam ((B_\ep\circ f)^n(B_\delta(x)))  \le  \lip^n(2\delta)+\lip^{n-1}(2\ep)+\lip^{n-2}(2\ep)+\dots+2\ep.\]

So, if $\lip>1$, then \[\diam ((B_\ep\circ f)^n(B_\delta(x)))  \le  \lip^n(2\delta)+\frac{1-\lip^n}{1-\lip}(2\ep).\] Thus if $\diam ((B_\ep\circ f)^n(B_\delta(x)))$ is $D$, then $n$ is at least  $\log_{\lip}\left(\frac{D(\lip-1)+2\ep}{2\delta(\lip-1)+2\ep}\right)$.

When $\lip=1$, $\diam ((B_\ep\circ f)^n(B_\delta(x)))  \le 2\delta + 2n\ep$.

\end{proof}

Give the product space $X\times Y$ the sup metric, i.e., $d((x,y),(x',y'))=\max(d_X(x,x'),d_Y(y,y'))$.

\begin{prop}\label{prop:recPowers}
Let $X$ and $Y$ be compact, $f:X\to X$ and $g:Y \to Y$ be chain recurrent, and $k\in\Z^{+}$. Then $f^k:X\to X$ and $f\times g:X\times Y\to X \times Y$ are also chain recurrent, and for all $\ep>0$, $x\in X$, and $y\in Y$,

\begin{enumerate}
\item $\crt((x,y),f\times g))\ge \max (\crt(x,f), \crt(y,g))$,
\item $\crt((x,y),f\times g)) \le \operatorname{lcm} (\crt(x,f), \crt(y,g))$,
\item $\crt(x,f^k)\ge \ds \frac1k \crt(x,f)$, and
\item there exists an $\ep'\le \ep$ such that $\crt(x,f^k)\le  r_{\ep'}(x,f)$.
\end{enumerate}
\end{prop}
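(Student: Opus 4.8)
The plan is to reduce all four statements, together with the chain recurrence of $f^k$ and of $f\times g$, to simple bookkeeping about the lengths of loops at a point, using two elementary observations. First, since $X\times Y$ carries the sup metric, a sequence $((p_0,q_0),\dots,(p_n,q_n))$ is an $(\ep,f\times g)$-chain from $(x,y)$ to $(x',y')$ if and only if $(p_0,\dots,p_n)$ is an $(\ep,f)$-chain from $x$ to $x'$ \emph{and} $(q_0,\dots,q_n)$ is an $(\ep,g)$-chain from $y$ to $y'$ of the \emph{same} length $n$. Second, an $(\ep,f^k)$-chain can be expanded into an $(\ep,f)$-chain $k$ times as long at no cost, while (with a little care) an $(\ep',f)$-loop at a point can be contracted into an $(\ep,f^k)$-loop at that point once $\ep'$ is chosen small enough. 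Throughout I will use that concatenating a shortest $(\ep,f)$-loop at $x$ with itself produces $(\ep,f)$-loops at $x$ of every length that is a positive multiple of $\crt(x,f)$.

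For (1) and (2) I work entirely with the first observation. Let $((x,y)=(p_0,q_0),\dots,(p_n,q_n)=(x,y))$ be a shortest $(\ep,f\times g)$-loop at $(x,y)$, so $n=\crt((x,y),f\times g)$; its two coordinate projections are an $(\ep,f)$-loop at $x$ and an $(\ep,g)$-loop at $y$, each of length $n$, so $n\ge\crt(x,f)$ and $n\ge\crt(y,g)$, which is (1). For (2), set $a=\crt(x,f)$, $b=\crt(y,g)$ and $L=\operatorname{lcm}(a,b)$. Concatenating a shortest $(\ep,f)$-loop at $x$ with itself $L/a$ times gives an $(\ep,f)$-loop at $x$ of length $L$, and likewise an $(\ep,g)$-loop at $y$ of length $L$; pairing these coordinatewise yields an $(\ep,f\times g)$-loop at $(x,y)$ of length $L$, showing simultaneously that $f\times g$ is chain recurrent and that $\crt((x,y),f\times g)\le L$.

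For (3), let $(x=x_0,\dots,x_m=x)$ be a shortest $(\ep,f^k)$-loop at $x$, so $d(f^k(x_{i-1}),x_i)\le\ep$ for each $i$. Insert between each $x_{i-1}$ and $x_i$ the genuine orbit segment $f(x_{i-1}),f^2(x_{i-1}),\dots,f^{k-1}(x_{i-1})$: the first $k-1$ of the new steps have zero error, and the last new step has error $d(f^k(x_{i-1}),x_i)\le\ep$, so the result is an $(\ep,f)$-loop at $x$ of length $km$. Hence $\crt(x,f)\le km=k\,\crt(x,f^k)$, which is (3).

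For (4) and the chain recurrence of $f^k$, go the other way. Using uniform continuity of $f,f^2,\dots,f^{k-1}$ on the compact space $X$, choose $\ep'\le\ep$ so small that $d(a,b)\le\ep'$ implies $d(f^j(a),f^j(b))\le\ep/k$ for $j=0,1,\dots,k-1$. Take a shortest $(\ep',f)$-loop at $x$, of length $r=r_{\ep'}(x,f)$, and concatenate $k$ copies of it to get an $(\ep',f)$-loop $(z_0=x,z_1,\dots,z_{kr}=x)$ whose length $kr$ is a multiple of $k$; set $w_j=z_{jk}$ for $j=0,\dots,r$. A telescoping estimate along the orbit gives, for each $j$,
\[ d(f^k(w_{j-1}),w_j)\le\sum_{i=1}^{k} d\bigl(f^{k-i}(f(z_{(j-1)k+i-1})),\,f^{k-i}(z_{(j-1)k+i})\bigr), \]
and each summand is the image under some $f^{k-i}$ ($0\le k-i\le k-1$) of a pair at distance $\le\ep'$, hence is $\le\ep/k$; summing the $k$ terms gives $d(f^k(w_{j-1}),w_j)\le\ep$. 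Thus $(w_0,\dots,w_r)$ is an $(\ep,f^k)$-loop at $x$, so $f^k$ is chain recurrent and $\crt(x,f^k)\le r=r_{\ep'}(x,f)$. I expect the routine telescoping bound in (4), coupled with the need to shrink to $\ep'$ and then restore divisibility by $k$ by taking $k$ copies of the loop, to be the only place requiring genuine care; the remaining parts follow immediately from the two observations above.
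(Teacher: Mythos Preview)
Your proof is correct and follows essentially the same route as the paper: the coordinatewise characterization of $(\ep,f\times g)$-chains handles (1), (2), and chain recurrence of the product, while orbit insertion gives (3) and uniform continuity plus $k$-fold concatenation gives (4). The only cosmetic difference is that in (4) the paper invokes uniform continuity once to choose $\ep'$ so that any $(\ep',f)$-chain of length $k$ lands within $\ep$ of the true $f^k$-image of its start, whereas you make this explicit via the telescoping bound with the $\ep/k$ tolerance; the arguments are otherwise identical.
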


\begin{proof}
Observe that $((x_0,y_0),\ldots, (x_n,y_n))$ is an $\ep$-chain for $f\times g$ if and only if $(x_0,\ldots, x_n)$ and $(y_0,\ldots,y_n)$ are $\ep$-chains for $f$ and $g$ respectively.  Statement (1) and the fact that $f\times g$ is chain recurrent if and only if $f$ and $g$ both are follow immediately.

To prove (2), let $(x_0=x,x_1, \ldots, x_{n}=x)$ and $(y_0=y,y_1, \ldots, y_{m}=y)$ be $\ep$-chains.  Then the $\ep$-chain $$(x_0,\ldots, x_{n}=x_0, x_1, \ldots, x_{n}=x_0,\ldots\ldots, x_{n}=x_0)$$ formed by concatenating $(x_0=x,x_1, \ldots, x_{n}=x)$ with itself $\ds\frac{m}{\gcd(n,m)}$ times has length $\operatorname{lcm}(m,n)$, as does the $\ep$-chain $$(y_0,\ldots, y_{m}=y_0, y_1, \ldots, y_{m}=y_0,\ldots\ldots, y_{m}=y_0)$$ formed by concatenating $(y_0=y,y_1, \ldots, y_{m}=y)$ with itself $\ds\frac{n}{\gcd(n,m)}$ times.  Combining the two gives an $\ep$-chain from $(x,y)$ to itself.

To show that $f$ is chain recurrent if $f^k$ is, and to prove (3), it suffices to observe that if $(x_0,\ldots,x_n)$ is an $(\ep,f^k)$-chain, then $$(x_0,f(x_0),\ \ldots, f^{k-1}(x_0),x_1,f(x_1), \ldots\ldots, f^{k-1}(x_{n-1}), x_n)$$ is an $(\ep,f)$-chain of length $nk$.

To see that $f^k$ is chain recurrent if $f$ is, pick $\ep'$ small enough that for any $(\ep', f)$-chain $(x_0,\ldots,x_k)$, $x_k$ is within $\ep$ of $f^k(x_0)$.  (Such an $\ep'$ exists because of the uniform continuity of $f$.)  Thus, if $(x_0,\ldots,x_{nk})$ is an $(\ep',f)$-chain of length $nk$, then $(x_0,x_k,x_{2k},\ldots,x_{nk})$ is an $(\ep,f^k)$-chain of length $n$.  To prove (4), observe that by taking an $(\ep',f)$-chain of length $n$ from $x$ to $x$ and concatenating it with itself $k$ times, we get an $(\ep',f)$-chain of length $nk$, and thus an $(\ep,f^k)$-chain of length $n$.

\end{proof}

\begin{remark}
If we give $X\times Y$ the Euclidean metric, then (2) becomes ``$\crt((x,y),f\times g) \le \operatorname{lcm} (r_{\ep/2}(x,f), r_{\ep/2}(y,g))$.''
\end{remark}

\begin{prop}
Let $X$ and $Y$ be compact, $f:X\to X$ and $g:Y \to Y$ be chain mixing, and $k\in\Z^{+}$.  Then $f^k:X\to X$ and $f\times g:X\times Y\to X \times Y$ are also chain mixing, and for all $\ep>0$,
\begin{enumerate}
\item $\mt(\delta,f\times g)=\max(\mt(\delta,f), \mt(\delta,g))$
\item $\mt(\delta,f^k)\ge \ds \frac1k \mt(\delta,f)$
\item there exists an $\ep'\le \ep$ such that $\mt(\delta,f^k) \le m_{\ep'}(\delta,f)$.
\end{enumerate}
\end{prop}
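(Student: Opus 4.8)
The plan is to follow the template of Proposition~\ref{prop:recPowers}, but to work throughout with the set operator $B_\ep\circ f$ and the characterization $\mt(x,\delta,f)=\min\{N:(B_\ep\circ f)^N(B_\delta(x))=X\}$. For the product map I would first record the elementary facts that, since $X\times Y$ carries the sup metric, a ball splits as $B_\delta((x,y))=B_\delta(x)\times B_\delta(y)$, one has $B_\ep(A\times B)=B_\ep(A)\times B_\ep(B)$, and $(f\times g)(A\times B)=f(A)\times g(B)$; a one-line induction then gives $(B_\ep\circ(f\times g))^n(B_\delta(x)\times B_\delta(y))=(B_\ep\circ f)^n(B_\delta(x))\times(B_\ep\circ g)^n(B_\delta(y))$. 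This is all of $X\times Y$ exactly when both factors are full, i.e.\ exactly when $n\ge\mt(x,\delta,f)$ and $n\ge\mt(y,\delta,g)$; hence $\mt((x,y),\delta,f\times g)=\max(\mt(x,\delta,f),\mt(y,\delta,g))$, and maximizing over $(x,y)$ gives (1). That $f\times g$ is chain mixing I would prove directly: given $\ep>0$, take $N$ larger than the thresholds witnessing $\ep$-chain mixing of $f$ and of $g$, and for $n\ge N$ pair an $\ep$-chain of length $n$ in $X$ with one of length $n$ in $Y$ coordinatewise.

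For $f^k$ I would reuse the device from the proof of Proposition~\ref{prop:recPowers}: by uniform continuity of $f$, choose $\ep'\le\ep$ so small that every $(\ep',f)$-chain of length $k$ ends within $\ep$ of the $f^k$-image of its first point, so that subsampling any $(\ep',f)$-chain $(z_0,z_1,\dots,z_{nk})$ to $(z_0,z_k,z_{2k},\dots,z_{nk})$ produces an $(\ep,f^k)$-chain of length $n$ with the same first and last terms. Since $f$ is $\ep'$-chain mixing there is an $N'$ so that any two points are joined by $(\ep',f)$-chains of every length $\ge N'$; as $nk\ge n\ge N'$, subsampling yields $(\ep,f^k)$-chains of every length $n\ge N'$ between any two points, so $f^k$ is $\ep$-chain mixing and hence (as $\ep$ was arbitrary) chain mixing. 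For (3) I apply the same subsampling to the $(\ep',f)$-chains of length $nk$ that run from a point of $B_\delta(x)$ to an arbitrary target $y$ --- these exist whenever $nk\ge m_{\ep'}(\delta,f)$ --- to conclude $\mt(x,\delta,f^k)\le m_{\ep'}(\delta,f)$ for every $x$, and therefore $\mt(\delta,f^k)\le m_{\ep'}(\delta,f)$.

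Part (2) is the reverse comparison, and it is here that I expect the only real (though modest) obstacle: $\mt(\delta,f)$ insists on $\ep$-chains of \emph{every} large enough length, not merely lengths divisible by $k$. From $(B_\ep\circ f)^j(A)\supseteq f^j(A)$ one gets $(B_\ep\circ f)^k(A)\supseteq(B_\ep\circ f)(f^{k-1}(A))=B_\ep(f^k(A))=(B_\ep\circ f^k)(A)$, and iterating, $(B_\ep\circ f^k)^n(A)\subseteq(B_\ep\circ f)^{nk}(A)$; so with $N=\mt(\delta,f^k)$ we obtain $(B_\ep\circ f)^{nk}(B_\delta(x))=X$ for all $n\ge N$. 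To upgrade this from multiples of $k$ to all $m\ge kN$, I would observe that chain mixing of $f$ forces $B_\ep(f(X))=X$ --- the last step of an $\ep$-chain ending at any prescribed point shows it --- so $(B_\ep\circ f)(X)=X$; writing $m=nk+r$ with $n\ge N$ and $0\le r<k$ then gives $(B_\ep\circ f)^m(B_\delta(x))=X$. Hence $\mt(x,\delta,f)\le kN$ for every $x$, so $\mt(\delta,f)\le k\,\mt(\delta,f^k)$, which is (2). Everything else is the routine concatenation-and-subsampling bookkeeping already carried out in Proposition~\ref{prop:recPowers}.
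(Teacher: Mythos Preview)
Your proof is correct and follows the same strategy the paper indicates: it simply refers back to Proposition~\ref{prop:recPowers}, and your chain-splitting for products, fill-in of $(\ep,f^k)$-chains to $(\ep,f)$-chains for (2), and uniform-continuity subsampling for (3) are exactly those arguments carried over to the mixing setting. You are in fact more careful than the paper on one point: in (2) the fill-in only produces $(\ep,f)$-chains of lengths divisible by $k$, and you correctly close the gap using $(B_\ep\circ f)(X)=X$---the paper sidesteps this by tacitly relying on its ``in other words'' characterization $\mt(x,\delta,f)=\min\{N:(B_\ep\circ f)^N(B_\delta(x))=X\}$, which already encodes that once the iterate is all of $X$ it stays so.
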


\begin{proof}
Statement (1) and the fact that $f\times g$ is chain mixing if and only if both $f$ and $g$ are follow from the definition of chain mixing.  The proofs of (2) and (3) and of the fact that $f^k$ is chain mixing are essentially identical to those of the corresponding statements for chain recurrent maps in Proposition \ref{prop:recPowers}.
\end{proof}

Notice that if $f:X\to X$ is chain recurrent and $S\subset X$ is an invariant subset, then $\crt(f|_S)$ may be greater than or less than $\crt(f)$;  while clearly $\crt(x,f|_S)\ge\crt(x,f)$ for $x\in S$, we take the maximum over a smaller set ($S$ instead of $X$) to get $\crt(f|_S)$.  Consider the following example.

\begin{ex}
Let $X=S_{1}\cup S_{2}\cup S_{3}$ and $X'=S_{1}\cup S_{2}$ where $S_{1}$, $S_{2}$, and $S_{3}$ are the circles shown in Figure~\ref{fig:circs}.   Let $f:X\to X$ be a homeomorphism with fixed points at the south poles and motion indicated by the arrows.  Assume that away from the fixed points the motion on $S_{1}$ is much faster than the motion on $S_{2}$ which is much faster than the motion on $S_{3}$. If $S_1$ stays close enough to $S_2$ near their common fixed point, then a point $x$ near the fixed point can return to itself more quickly via an $\ep$-chain passing through $S_1$ than via one that stays in $S_2$, so $\crt(x,f|_{S_2}) \ge \crt(x,f|_{X'})$, and in fact $\crt(f|_{S_2}) > \crt(f|_{X'})$.  On the other hand, points move very slowly on $S_3$, so $\crt(f|_{S_2}) < \crt(f)$.
\end{ex}

\begin{figure}
\psfrag{S1}{$S_{1}$} 
\psfrag{S2}{$S_{2}$} 
\psfrag{S3}{$S_{3}$} 
\includegraphics{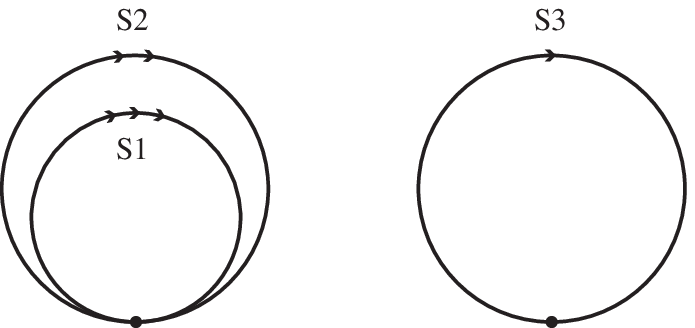}
\caption{}
\label{fig:circs}
\end{figure}

\section{Topological entropy}
\label{sec:entropy}

The following theorem shows that the growth of the chain mixing times can give information about topological entropy.

\begin{thm}\label{thm:entEst}  
Let $f:X\to X$ be chain mixing.  Then the topological entropy, $h(f)$, satisfies
$$\ds h(f)\ge d\cdot \limsup_{\delta\to0}\frac{\log (1/\delta)}{\lim_{\ep\to0}\mt(\delta)},$$ where $d$ is the lower box dimension of $X$.
\end{thm}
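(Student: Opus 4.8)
The plan is to relate the chain mixing time to a spanning-set count for topological entropy. Recall that $h(f) = \lim_{\ep\to 0} \limsup_{n\to\infty} \frac1n \log \operatorname{span}(n,\ep,f)$, where $\operatorname{span}(n,\ep,f)$ is the minimal cardinality of an $(n,\ep)$-spanning set. The key observation I would exploit is the following: if $f$ is chain mixing, then from a single point $x$ one can reach \emph{every} point of $X$ via an $\ep$-chain of length exactly $N = m_\ep(\delta)$ starting in $B_\delta(x)$ — in the notation of the paper, $(B_\ep\circ f)^N(B_\delta(x)) = X$. Unwinding this, each $\ep$-chain of length $N$ is a choice at each of the $N$ steps of a ball of radius $\ep$ to jump into; so the set of endpoints of all length-$N$ $\ep$-chains from $B_\delta(x)$ is covered by the images under $f^N, f^{N-1},\dots$ of finitely many $\ep$-balls. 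More precisely, I would fix a minimal $\ep$-spanning set (or cover by $\ep$-balls) of cardinality $s_\ep$; then the number of distinct ``$\ep$-chain itineraries'' of length $N$ is at most $s_\ep^{N}$, and since \emph{every} point of $X$ is the endpoint of such a chain, a set of $s_\ep^N$ carefully chosen orbit-starting-points will $\delta'$-span $X$ in $N$ steps for a suitable $\delta'$ comparable to $\ep$ and $\delta$. Turning this around: to cover $X$ by $\delta$-balls (which takes roughly $(1/\delta)^{d}$ balls, by the definition of lower box dimension $d$), we need $s_\ep^{N} \gtrsim (1/\delta)^{d}$, i.e. $N\log s_\ep \gtrsim d\log(1/\delta)$.

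First I would make the spanning estimate precise. Fix $\delta$, then fix $\ep < \delta$ small, set $N = m_\ep(\delta,f)$. Let $x\in X$ be arbitrary. Because $(B_\ep\circ f)^N(B_\delta(x)) = X$, every $y\in X$ is joined to some $x'\in B_\delta(x)$ by an $\ep$-chain $(x' = z_0, z_1,\dots, z_N = y)$. Replace each $z_i$ ($1\le i\le N-1$) by the center $w_i$ of an $\ep$-ball from a fixed minimal cover $\{B_\ep(w^{(1)}),\dots,B_\ep(w^{(s_\ep)})\}$ containing it; then $(x', w_1,\dots, w_{N-1}, y)$ records the same data up to controlled error, and in fact the true orbit $x', f(x'), \dots, f^N(x')$ stays within a distance of $y$ that I can bound by $\ep$ times a geometric-type factor built from the modulus of continuity of $f$ — but more cleanly, I would instead run the shadowing in the other direction: given an itinerary $(w_1,\dots,w_{N-1})$ and a target index, pick \emph{one} genuine point whose $f$-orbit follows that itinerary, if one exists. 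The set $E$ of all such chosen genuine points has cardinality at most $s_\ep^{N-1}$ (one per itinerary), and by construction every $y\in X$ lies within some fixed radius $\rho(\ep,\delta)$ of the $N$-th iterate of a point of $E$; a short uniform-continuity argument shows $E$ is $(N,\rho)$-spanning with $\rho\to 0$ as $\ep\to 0$. Hence $\operatorname{span}(N,\rho,f) \le s_\ep^{N}$.

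Next I would assemble the entropy inequality. From $\operatorname{span}(N,\rho,f) \le s_\ep^{N}$ with $N = m_\ep(\delta)$, and from the cover of $X$ by $\delta$-balls requiring at least (roughly) $(1/\delta)^{d}$ of them when $d$ is the lower box dimension — combined with the fact that the endpoints of the length-$N$ chains from the single starting point $B_\delta(x)$ already exhaust $X$, so $s_\ep^{N} \ge$ (number of $\delta$-balls needed to cover $X$) once $\ep$ is small relative to $\delta$ — I get $N \log s_\ep \gtrsim d\log(1/\delta)$, i.e. $\log s_\ep \ge \frac{d\log(1/\delta)}{m_\ep(\delta)}(1-o(1))$. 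On the other hand $\frac1N\log\operatorname{span}(N,\rho,f)\le \log s_\ep$ does not directly give entropy because $N$ is a single value, not $n\to\infty$; to fix this I would iterate: applying chain mixing on the time scale $N$, a length-$jN$ $\ep$-chain from $B_\delta(x)$ to $y$ can be built for every $j$, giving $\operatorname{span}(jN,\rho,f)\le s_\ep^{jN}$ — wait, that bound is too weak. Instead, the correct route is: $\lim_{\ep\to 0} m_\ep(\delta) =: m(\delta)$ exists (monotonicity in $\ep$), and for the entropy lower bound one uses that $h(f) = \sup_\ep \limsup_n \frac1n\log\operatorname{span}(n,\ep,f)$; choosing $\ep$ small, $N_\ep = m_\ep(\delta)$, and noting $\operatorname{span}(N_\ep, \rho(\ep), f)\ge$ (the $\delta$-covering number of $X$) $\approx (1/\delta)^{d}$, while $\operatorname{span}(N_\ep,\rho(\ep),f)$ grows like $e^{h(f)N_\ep}$ as $\ep\to 0$ forces $\rho(\ep)\to 0$, one obtains $e^{h(f) N_\ep}\gtrsim (1/\delta)^{d}$, hence $h(f)\ge \frac{d\log(1/\delta)}{N_\ep}(1-o(1))$; sending $\ep\to 0$ gives $h(f)\ge \frac{d\log(1/\delta)}{m(\delta)}$, and then $\limsup_{\delta\to 0}$ yields the claim.

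The main obstacle — and the step I expect to require the most care — is the second one: legitimately converting ``every point of $X$ is the endpoint of some length-$N$ $\ep$-chain'' into a genuine $(N,\rho)$-spanning set of controlled cardinality, with $\rho\to 0$ as $\ep\to 0$ \emph{uniformly enough} that one can feed it into the $\sup_\ep$ defining entropy. This is a shadowing-type bookkeeping argument (the number of itineraries is $s_\ep^{N}$, but $N=m_\ep(\delta)$ itself blows up as $\ep\to 0$, so one must be sure the error $\rho$ accumulated over $N$ steps still goes to $0$); handling this correctly is where the hypotheses about box dimension and the precise definition of $m_\ep(\delta)$ as the stabilized limit $m(\delta)$ get used, and it is the crux of why the dimension constant $d$ appears multiplicatively.
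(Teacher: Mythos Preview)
Your proposal has a genuine gap, and in fact the overall direction is backwards.  All of your explicit constructions produce \emph{upper} bounds on spanning numbers (you build a set $E$ of size at most $s_\ep^{N}$ that is $(N,2\ep)$-spanning), which can only give upper bounds on entropy.  When you finally turn to a lower bound, you assert that $\operatorname{span}(N_\ep,\rho(\ep),f)\ge (1/\delta)^{d}$ and that $\operatorname{span}(N_\ep,\rho(\ep),f)$ ``grows like $e^{h(f)N_\ep}$''; neither is justified.  The first is trivially true when $\rho\le\delta$ (any $(n,\rho)$-spanning set is $\rho$-dense at time~$0$), but that bound is independent of $N_\ep$, so dividing by $N_\ep$ and sending $\ep\to 0$ gives zero.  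The second conflates the defining double limit $\lim_{\rho\to 0}\limsup_{n\to\infty}$ with a diagonal limit along $(\rho(\ep),N_\ep)$, which there is no reason to expect to agree.  More fundamentally, chain mixing is a statement about \emph{pseudo}-orbits, and nothing in your argument converts pseudo-orbit information into genuine-orbit spanning or separation counts; the ``shadowing-type bookkeeping'' you flag as the main obstacle is not a bookkeeping issue but a genuine shadowing hypothesis you do not have.

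The paper avoids all of this by never passing through real orbits.  It invokes Misiurewicz's formula
\[
h(f)=\lim_{\delta\to0}\lim_{\ep\to0}\limsup_{n\to\infty}\tfrac1n\log S(n,\delta,\ep),
\]
where $S(n,\delta,\ep)$ is the maximal cardinality of a $\delta$-\emph{separated} set of $\ep$-\emph{chains} of length $n$.  Then it \emph{iterates} chain mixing: fix a $3\delta$-separated set $\{x_1,\dots,x_M\}$ with $M=N(3\delta)$, and for each word $(i_0,\dots,i_k)$ use chain mixing to build an $\ep$-chain of length $k\,m_\ep(\delta)$ passing within $\delta$ of $x_{i_0},x_{i_1},\dots,x_{i_k}$ at times $0,m_\ep(\delta),\dots,k\,m_\ep(\delta)$.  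These $M^{k+1}$ chains are $\delta$-separated, so $S(k\,m_\ep(\delta),\delta,\ep)\ge M^{k+1}$; letting $k\to\infty$ gives a growth rate $\ge (\log M)/m_\ep(\delta)$, and the box-dimension bound $M\ge C(1/\delta)^{d}$ finishes.  The two ideas you are missing are (i) computing entropy via separated pseudo-orbits rather than real orbits, and (ii) concatenating $k$ chain-mixing segments to get a count that grows exponentially in $k$, not a single-time estimate.
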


\begin{proof}  In \cite{M}, Misiurewicz shows that we can use the growth rate of $\ep$-chains to calculate topological entropy.  Specifically, $$h(f)=\lim_{\delta\to0}\lim_{\ep\to0}\limsup_{n\to\infty} \frac1n\log S(n,\delta,\ep),$$
where $S(n,\delta,\ep)$ is the maximum cardinality of a set of $\delta$-separated $\ep$-chains of length $n$.  (Two $\ep$-chains $(x_0,\dots,x_{n-1})$ and $(y_0,\dots,y_{n-1})$ are $\delta$-separated if $d(x_i,y_i)\ge\delta$ for some $i$.)

For $\alpha>0$, let $N(\alpha)=S(0,\alpha,0)$ (i.e.,  the maximum cardinality of an $\alpha$-separated collection of points).  We claim that for $k\ge0$, $S(k\mt(\delta),\delta,\ep) \ge (N(3\delta))^{k+1}$.  To see this, let $\{x_1,\dots,x_{N(3\delta)}\}$ be a $3\delta$-separated set of points.  By the definition of $\mt(\delta)$, for each of the $(N(3\delta))^{k+1}$ sequences $(i_0,\dots,i_k)$ ($1\le i_j\le N(3\delta)$) there is a sequence of points $(x_{i_1,1},\dots,x_{i_k,k})$ such that for each $j$, $x_{i_j,j}\in B_\delta(x_i)$ and there is an $\ep$-chain of length $\mt(\delta)$ from $x_{i_j,j}$ to $x_{i_{j+1},j+1}$.  Since the points $x_i$ are $3\delta$-separated, the sequences $(x_{i_1,1},\dots,x_{i_k,k})$ are $\delta$-separated.  Thus $S(k\mt(\delta),\delta,\ep) \ge (N(3\delta))^{k+1}$.

Next, let $\nspan(\alpha)$ be the minimum number of balls of radius $\alpha$ necessary to cover $X$.  By the definition of lower box dimension (\cite[\S6]{P}), for small enough $\alpha$, $\nspan(\alpha)\ge C(1/\alpha)^d$ for some positive constant $C$.  Clearly $N(\alpha)\ge \nspan(\alpha)$ (\cite[Lemma 8.1.10]{R}), so $N(\alpha)\ge C(1/\alpha)^d$ as well.

Finally, we have that 
\begin{eqnarray*}h(f) & = & \lim_{\delta\to0}\lim_{\ep\to0}\limsup_{n\to\infty} \frac1n\log S(n,\delta,\ep)
\\ & \ge & \limsup_{\delta\to0}\lim_{\ep\to0}\limsup_{k\to\infty} \frac1{k\mt(\delta)} \log (N(3\delta))^{k+1} 
\\ & = &\limsup_{\delta\to0}\lim_{\ep\to0}\frac{\log N(3\delta)}{\mt(\delta)} 
\\ &\ge & \limsup_{\delta\to0}\lim_{\ep\to0}\frac{\log C/(3\delta)^d}{\mt(\delta)}
\\ & = & d \limsup_{\delta\to0}\frac{\log 1/\delta}{\lim_{\ep\to0}\mt(\delta)},
\end{eqnarray*}
 since $\lim_{\delta\to0}\mt(\delta)=\infty$ (recall that $X$ is not a single point, and it cannot be a finite collection of points since $f$ is chain mixing).
\end{proof}

In the following section we see that the formula in Theorem \ref{thm:entEst} can give the precise value of the topological entropy.











\section{Examples}
\label{sec:examples}

We conclude with three examples---rigid rotations of the circle, subshifts of finite type, and the doubling map on the circle---in which we illustrate the ideas in the previous sections.

\begin{ex}[Rigid rotations]
Consider the dynamical system $R_{\alpha}:S^{1}=\R/\Z\to S^{1}$ which is the rigid rotation by $\alpha\in\R$ given by $R_{\alpha}(x)=\alpha+ x$ (mod 1).  We know that when $\alpha$ is rational every point is periodic and when $\alpha$ is irrational every point has a dense orbit.  Thus, in either case $S^{1}$ is chain mixing.  The chain mixing time is just the least integer greater than or equal to $\frac1{2\ep}$. 

The chain recurrence time for $R_{\alpha}$ depends on the Diophantine properties of $\alpha$.  Let $\ep>0$ be given.  Clearly, $\crt(x,R_{\alpha})=\crt(R_{\alpha})=\crt(R_{\alpha+n})$ for all $x\in S^{1}$ and  $n\in\Z$.  So we may assume that $0<\ep<\alpha<1$ and $x=0$.    Notice that for a rotation, the set of points that can be reached with an $\ep$-chain of length $n$ beginning at $0$ is precisely those points within $n\ep$ of $R_\alpha^{n}(0)$.  Therefore $\crt$ is the smallest integer $n>0$ such that $d(R^{n}_{\alpha}(0),0)<n\ep$.  Equivalently, we want to find the smallest $n>0$ such that \[|n\alpha-m|<n\ep,\text{ or equivalently, }\Big|\alpha-\frac{m}{n}\Big|<\ep\] for some $m\in\Z$.  

If $\alpha$ is rational then $0$ is periodic with some period, $q$.  In this case, for $\ep$ sufficiently small, the shortest $\ep$ chain from $0$ to $0$ is the periodic orbit.  Thus, $\crt=q$.

Now assume $\alpha$ is irrational.  To analyze this case we need to use continued fractions (for details, see \cite{K}).  Let \[[a_{0},a_{1},a_{2},...]=a_{0}+\cfrac{1}{a_{1}+\cfrac{1}{a_{2}+\dotsb}}\] be the continued fraction expansion for $\alpha$, and $p_{k}/q_{k}=[a_{0},a_{1},...,a_{k}]$ the $k$th convergent (assume that $p_{k}/q_{k}$ is in lowest terms).  We say that the rational number $p/q$ ($q>0$) is a \emph{best approximation} of $\alpha$ if the inequalities $p/q\ne r/s$ and $0<s\le q$ imply \[|q\alpha-p|<|s\alpha-r|.\]   In other words, the only way to get a better rational approximation of $\alpha$ than a best approximation is by increasing the denominator.

\begin{thm}[\cite{K}]
\label{thm:best}
Suppose $\alpha$ is irrational.  The rational number $p/q$ ($q>0$) is a best approximation of $\alpha$ if and only if it is a convergent of $\alpha$.
\end{thm}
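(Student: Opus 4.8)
The plan is to prove the two implications separately, using only the standard arithmetic of the convergents, which I would quote from \cite{K}: the recursions $p_k=a_kp_{k-1}+p_{k-2}$, $q_k=a_kq_{k-1}+q_{k-2}$ (with $p_{-1}=1$, $q_{-1}=0$), the determinant identity $p_jq_{j+1}-p_{j+1}q_j=(-1)^{j+1}$ for consecutive indices, and the facts that $q_j<q_{j+1}\to\infty$ and that the numbers $q_k\alpha-p_k$ are nonzero, alternate in sign, and have strictly decreasing absolute values. The single engine behind everything is a lattice observation: since for consecutive $j$ the integer matrix with columns $(p_j,q_j)$ and $(p_{j+1},q_{j+1})$ has determinant $\pm1$, every integer pair $(p,q)$ can be written as $x(p_j,q_j)+y(p_{j+1},q_{j+1})$ with $x,y\in\Z$.

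The one computation I would actually carry out is the following lemma: \emph{if $1\le q<q_{k+1}$, then $|q\alpha-p|\ge|q_k\alpha-p_k|$ for every integer $p$.} Writing $(p,q)=x(p_k,q_k)+y(p_{k+1},q_{k+1})$, the case $y=0$ gives $q=xq_k$ with $x\ge1$, so $|q\alpha-p|=x|q_k\alpha-p_k|\ge|q_k\alpha-p_k|$; the case $x=0$ gives $q=yq_{k+1}$ with $|y|\ge1$, contradicting $q<q_{k+1}$; and if $x,y\ne0$, then $q<q_{k+1}$ and the positivity of all $q_j$ force $x$ and $y$ to have opposite signs, so, since $q_k\alpha-p_k$ and $q_{k+1}\alpha-p_{k+1}$ also have opposite signs, the two terms of $q\alpha-p=x(q_k\alpha-p_k)+y(q_{k+1}\alpha-p_{k+1})$ have the same sign, whence $|q\alpha-p|=|x|\,|q_k\alpha-p_k|+|y|\,|q_{k+1}\alpha-p_{k+1}|\ge|q_k\alpha-p_k|$.

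Granting the lemma, ``best approximation $\Rightarrow$ convergent'' is immediate: given a best approximation $p/q$ (in lowest terms, $q>0$), let $k$ be the largest index with $q_k\le q$, so $q_k\le q<q_{k+1}$; if $p/q$ were not a convergent then $p/q\ne p_k/q_k$ while $q\ge q_k$, so the definition of best approximation gives $|q\alpha-p|<|q_k\alpha-p_k|$, contradicting the lemma. For ``convergent $\Rightarrow$ best approximation'' (for $k\ge1$) I would run the analogous lattice argument with the consecutive pair $(p_{k-1},q_{k-1})$, $(p_k,q_k)$: given $0<q\le q_k$ and $p/q\ne p_k/q_k$, write $(p,q)=x(p_k,q_k)+y(p_{k-1},q_{k-1})$; the case $y=0$ would force $p/q=p_k/q_k$ (excluded); the case $x=0$ gives $|q\alpha-p|=|y|\,|q_{k-1}\alpha-p_{k-1}|\ge|q_{k-1}\alpha-p_{k-1}|>|q_k\alpha-p_k|$; and $x,y\ne0$ with $0<q\le q_k$ again forces $x,y$ to have opposite signs, so $|q\alpha-p|=|x|\,|q_k\alpha-p_k|+|y|\,|q_{k-1}\alpha-p_{k-1}|>|q_k\alpha-p_k|$. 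In every case $|q_k\alpha-p_k|<|q\alpha-p|$, so $p_k/q_k$ is a best approximation.

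I expect the only real obstacle to be bookkeeping: pinning down the sign case-analysis for $x$ and $y$ in the lattice decompositions, and correctly using that consecutive convergents lie on opposite sides of $\alpha$ — that is the step I would write out in full. There is one genuinely exceptional point, which I would flag exactly as \cite{K} does: the zeroth convergent $p_0/q_0=a_0$ meets the stated definition of a best approximation precisely when $a_1\ge2$ (if $a_1=1$, then the integer $a_0+1$, which equals $p_1/q_1$, lies strictly closer to $\alpha$), so the clean ``if and only if'' is to be read with this caveat at $k=0$, and the second implication above is really a statement about the convergents with $k\ge1$.
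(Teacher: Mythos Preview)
The paper does not prove this theorem at all: it is quoted verbatim from Khinchin \cite{K} and used as a black box in the discussion of rigid rotations. Your argument is correct and is essentially the classical proof one finds in \cite{K}, built on the unimodularity of consecutive convergents and the alternation of the errors $q_k\alpha-p_k$; your caveat about the case $k=0$ is also the standard one.
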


From Theorem \ref{thm:best} we can conclude that for a given $\ep$, the values of $n$ and $m$ that we wish to find are precisely $q_{k}$ and $p_{k}$ for some convergent $p_{k}/q_{k}$ of $\alpha$.  That is, the function $\crt$ is a nonincreasing piecewise constant function taking on the values $\{q_{k}\}$. Thus, to understand the behavior of $\crt$ as $\ep\to 0$ we must investigate the rate of convergence of $p_{k}/q_{k}$ to $\alpha$.  The following theorem gives a bound on this rate of convergence.

\begin{thm}[\cite{K}]
Let $\{p_{k}/q_{k}\}$ be the sequence of convergents for $\alpha\in\R$.  Then for all $k>1$
\[\displaystyle\Big|\alpha-\frac{p_{k}}{q_{k}}\Big|<\frac{1}{q_{k}^{2}}.\]
\end{thm}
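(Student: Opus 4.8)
The plan is to establish the sharper identity
\[
\left|\alpha-\frac{p_k}{q_k}\right| = \frac{1}{q_k(\alpha_{k+1}q_k+q_{k-1})},
\]
where $\alpha_{k+1}:=[a_{k+1},a_{k+2},a_{k+3},\dots]$ is the complete quotient, and then bound the denominator from below. First I would record the two standard algebraic facts about the convergents, both obtained by a routine induction from the recurrences $p_k=a_kp_{k-1}+p_{k-2}$, $q_k=a_kq_{k-1}+q_{k-2}$ (with $p_{-1}=1$, $q_{-1}=0$, $p_0=a_0$, $q_0=1$): the determinant identity $p_kq_{k-1}-p_{k-1}q_k=(-1)^{k-1}$, and the complete-quotient formula
\[
\alpha=\frac{\alpha_{k+1}p_k+p_{k-1}}{\alpha_{k+1}q_k+q_{k-1}},
\]
valid for every $k$ (when $\alpha\in\mathbb Q$, up to one index before the last).

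Granting these, combining the two fractions over the common denominator $q_k(\alpha_{k+1}q_k+q_{k-1})$ and using the determinant identity gives
\[
\alpha-\frac{p_k}{q_k}=\frac{p_{k-1}q_k-p_kq_{k-1}}{q_k(\alpha_{k+1}q_k+q_{k-1})}=\frac{(-1)^k}{q_k(\alpha_{k+1}q_k+q_{k-1})}.
\]
Since $\alpha_{k+1}=a_{k+1}+1/\alpha_{k+2}\ge a_{k+1}$, we get $\alpha_{k+1}q_k+q_{k-1}\ge a_{k+1}q_k+q_{k-1}=q_{k+1}$, and because each $a_j\ge1$ and $q_{k-1}\ge1$ (as $k>1$) we have $q_{k+1}=a_{k+1}q_k+q_{k-1}>q_k$. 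Therefore
\[
\left|\alpha-\frac{p_k}{q_k}\right|=\frac{1}{q_k(\alpha_{k+1}q_k+q_{k-1})}\le\frac{1}{q_kq_{k+1}}<\frac{1}{q_k^{2}},
\]
which is the assertion. The one case not covered is $\alpha$ rational and equal to its final convergent $p_n/q_n$ with $k=n$, where the left side is $0<1/q_n^2$ trivially; the hypothesis $k>1$ guarantees that the relevant convergents and complete quotients exist and that $q_{k+1}>q_k$.

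The step I expect to require the actual work is the complete-quotient formula $\alpha=(\alpha_{k+1}p_k+p_{k-1})/(\alpha_{k+1}q_k+q_{k-1})$; once it is in hand, what remains is the one-line computation above and an elementary inequality. That formula is proved by induction on $k$: the base case $k=0$ is just $\alpha=a_0+1/\alpha_1$, i.e.\ the definition of $\alpha_1$, and the inductive step substitutes $\alpha_{k+1}=a_{k+1}+1/\alpha_{k+2}$ and simplifies using the recurrences. Alternatively, one can avoid complete quotients by showing directly that $\alpha$ lies (strictly, when $\alpha$ is irrational) between $p_k/q_k$ and $p_{k+1}/q_{k+1}$ --- the even convergents increasing to $\alpha$, the odd ones decreasing --- and then invoking $\left|\frac{p_{k+1}}{q_{k+1}}-\frac{p_k}{q_k}\right|=\frac1{q_kq_{k+1}}$ from the determinant identity; this route is a bit longer since it requires analyzing the convergence of the continued fraction. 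Since this theorem is quoted from \cite{K}, in the paper one would simply cite it there, but the argument sketched here is self-contained.
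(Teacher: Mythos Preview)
Your argument is correct and is essentially the standard proof found in Khinchin. The paper itself does not prove this statement: it is quoted as a theorem from \cite{K} and used as a black box, so there is no in-paper proof to compare against; your self-contained derivation via the complete-quotient formula and the determinant identity is exactly the route taken in the cited reference.
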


Thus, for a given $\ep$, if $p_{k}/q_{k}$ is the first convergent of $\alpha$ with $q_{k}\ge 1/\sqrt{\ep}$, then  \[\Big|\alpha-\frac{p_{k}}{q_{k}}\Big|<\frac{1}{q_{k}^{2}}\le\ep.\]  In other words, the fastest that $\crt$ can grow is at a rate of $O(1/\sqrt{\ep})$.

In fact, this worst case can be realized.  For example, when $\alpha=(\sqrt{5}-1)/2=[1,1,1,1,\dots]$ we have (\cite{K})  \[\Big|\alpha-\frac{p_{k}}{q_{k}}\Big|=\frac{1}{(\sqrt{5}+\beta_{k})q_{k}^{2}}\] where $\beta_{k}\to 0$ as $k\to\infty$.  Thus, when $\ep=1/q_{k}^{2}$ we have  \[\Big|\alpha-\frac{p_{k}}{q_{k}}\Big|=\frac{\ep}{\sqrt{5}+\beta_{k}}\sim\frac{\ep}{\sqrt{5}}\] for large $k$, so $\crt(R_\alpha)\approx \ds\frac1{\sqrt{\sqrt{5}\ep}}$.  

On the other hand, there are $\alpha$ for which $\crt(R_\alpha)$ grows slowly---as slowly as desired---as $\ep\to 0$.  In particular,  if $\psi:\R^{+}\to\R^{+}$ is any decreasing function with $\psi(x)\to\infty$ as $x\to 0$, then there is an $\alpha$ such that $r_{\ep_{i}}(R_\alpha)\le\psi(\ep_{i})$ for some sequence $\ep_{i}\to0$. To see this we need the following theorem.

\begin{thm}[\cite{K}]
\label{thm:fastconv}
For any function $\varphi:\Z^{+}\to\R^{+}$, there is an irrational number $\alpha$ such that the inequality $|\alpha-p/q|<\varphi(q)$ has an infinite number of solutions in $p$ and $q$ ($q>0$).
\end{thm}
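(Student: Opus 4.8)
The plan is to construct $\alpha$ directly as an infinite continued fraction $[a_0,a_1,a_2,\dots]$ whose partial quotients grow quickly enough that its convergents approximate $\alpha$ faster than $\varphi$ requires. I would use the standard facts about continued fractions (see \cite{K}): the convergents $p_k/q_k=[a_0,\dots,a_k]$ obey the recurrences $p_k=a_kp_{k-1}+p_{k-2}$ and $q_k=a_kq_{k-1}+q_{k-2}$; the denominators satisfy $q_{k+1}>q_k$ for $k\ge1$; an infinite continued fraction converges to an irrational number; and $\bigl|\alpha-\tfrac{p_k}{q_k}\bigr|<\tfrac{1}{q_kq_{k+1}}$ for every $k$.

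First I would fix $a_0=1$ and $a_1=1$, which determines the pairs $(p_0,q_0)$ and $(p_1,q_1)$, and in particular the positive integers $q_0$ and $q_1$. Then I proceed recursively: suppose $a_0,\dots,a_k$ have been chosen, so that $q_{k-1}$ and $q_k$ are already fixed positive integers. The number $\varphi(q_k)$ is then a fixed positive real, so I may pick $a_{k+1}\in\Z^{+}$ large enough that
\[
q_{k+1}=a_{k+1}q_k+q_{k-1}>\frac{1}{q_k\,\varphi(q_k)}.
\]
Doing this for every $k$ produces a sequence $(a_k)$ of positive integers, hence an irrational number $\alpha=[a_0,a_1,a_2,\dots]$.

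It remains only to extract the conclusion. For each $k\ge1$ the choice of $a_{k+1}$ forces $q_kq_{k+1}>1/\varphi(q_k)$, so
\[
\Bigl|\alpha-\frac{p_k}{q_k}\Bigr|<\frac{1}{q_kq_{k+1}}<\varphi(q_k),
\]
which means $(p,q)=(p_k,q_k)$ is a solution of $|\alpha-p/q|<\varphi(q)$. Since $q_1<q_2<q_3<\cdots$, the pairs $(p_k,q_k)$ are pairwise distinct, giving infinitely many solutions.

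There is no real obstacle here; the one point to watch is that $\varphi$ is an arbitrary function, with no monotonicity or regularity assumed, so one cannot control its values at a range of arguments at once. This is harmless for the recursion, because at stage $k$ the only value of $\varphi$ that enters is $\varphi(q_k)$, evaluated at the single integer $q_k$ that is already pinned down by $a_0,\dots,a_k$ before $a_{k+1}$ is selected. The other thing to make sure of is that the denominators of the solutions are distinct --- guaranteed by $q_{k+1}>q_k$ --- so that ``a solution for each $k$'' really does yield infinitely many solutions rather than the same one repeatedly.
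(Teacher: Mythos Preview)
Your argument is correct and is exactly the classical construction found in Khinchin: pick the partial quotients $a_{k+1}$ recursively so that $q_{k+1}=a_{k+1}q_k+q_{k-1}$ exceeds $1/(q_k\varphi(q_k))$, then invoke $|\alpha-p_k/q_k|<1/(q_kq_{k+1})$. Note, however, that the paper does not supply its own proof of this statement at all---it simply quotes the result from \cite{K} and uses it as a black box---so there is nothing in the paper to compare your proof against beyond the reference itself.
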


Define $\varphi=\psi^{-1}|_{\Z^{+}}$.  Let $\alpha$ be the irrational number guaranteed by Theorem \ref{thm:fastconv} for this $\varphi$.    Then there exist infinitely many $p$ and $q$ with $|\alpha-p/q|<\varphi(q)$.  Let $\{q_{i}\}$ denote this collection of denominators, listed in increasing order, and let $\ep_{i}=\varphi(q_{i})$.  Then $r_{\ep_{i}}(R_\alpha)\le q_{i}=\varphi^{-1}(\ep_i)=\psi(\ep_{i})$.
\end{ex}

\begin{ex}[Subshifts of finite type]
\label{ex:finitetype}

Let $\sigma:\Sigma\to\Sigma$ be a subshift of finite type given by an $n\times n$ transition matrix $A$, with $k\ge2$, and metric $d(\A,\B)= \ds \sum \frac {\delta(a_i,b_i)}{2^{|i|}}$.  Since an $\ep$-jump can take any element of a cylinder set of diameter $\ep$ to any other element in the same cylinder set, it is easy to see that $\sigma$ is chain transitive if and only if it is topologically transitive.  
(This is because, for a one-sided shift, we can consider only $\ep$-chains that have a jump only at the first step, and afterwards are actual orbits; for a two-sided shift, they can have a jump at the beginning and at the end.)  The shift space $\Sigma$ decomposes into finitely many topologically transitive pieces (\cite[\S\S6.3, 4.4]{LM}), so we assume that $\sigma$ is topologically transitive.  For notational convenience, we consider one-sided shifts (the proofs for two-sided shifts are essentially the same).

Given $\ep>0$ and $\A=(a_0,a_1,\ldots)\in\Sigma$, we want to find $\crt(\A)$.  Let $k$ be the smallest nonnegative integer satisfying $2^{-k-1}<\ep$.  Since $\sigma$ is topologically transitive, the transition matrix $A$ is irreducible (\cite[\S6.3]{LM}), so there is an allowable word $(a_k,a'_1,a'_2,\ldots,a'_{m-1},a_0)$ of length $m\le n$ from $a_k$ to $a_0$.  Thus the sequence $(\B_0=\A,\B_1,\ldots,\B_{k+m}=\A)$ defined by $\B_1=(a_1,a_2,\ldots,a_k,a'_1,a'_2,\ldots,a'_{m-1},a_0,a_1,\ldots)$ and $\B_i=\sigma(\B_{i-1})$ for $i>1$ is an $\ep$-chain from $\A$ to itself of length $k+m$.  Notice that for some $\A\in\Sigma$ we can find an $\ep$-chain from $\A$ to itself that is shorter than this, but for most points in $\Sigma$ this is the shortest possible $\ep$-chain.  Thus, the worst case is that $\crt(\A)=O(\log_2(1/\ep))$.

Since we assume that $\ep<\delta$, a similar argument shows that if $\sigma$ is chain mixing, then $\mt(\delta,\sigma)$ is $O(\log_2(1/\delta))$ if $\sigma$ is one-sided, and $O(\log_2(1/\ep))$ if $\sigma$ is two-sided.  
\end{ex}






\begin{ex}[The doubling map]

Let $f(x)=2x\mod1$ be the doubling map on the circle $S^1=\R/\Z$.  If we consider binary expansions, then $S^1=\{(0.a_1a_2\ldots):a_i\in\{0,1\}\}$, with the identifications $(0.11\ldots)=(0.00\ldots)$, $(0.011\ldots)=(0.100\ldots)$, etc.  Thus the argument in Example \ref{ex:finitetype} shows that $\crt(f) = \lceil\log_2(1/\ep)\rceil$, where $\lceil\cdot\rceil$ is the least integer function.   Arguing as in the proof of Proposition~\ref{prop:LipEst}, we see that  $\mt(\delta,f)=\left\lceil \log_{2}\left(\frac{1+2\ep}{2\delta+2\ep}\right) \right\rceil$.  Then Theorem~\ref{thm:entEst} says that  \[h(f)\ge \limsup_{\delta\to0}\frac{\log (1/\delta)}{\lim_{\ep\to0}\left\lceil \log_{2}\left(\frac{1+2\ep}{2\delta+2\ep}\right) \right\rceil} = \limsup_{\delta\to0}\frac{\log (1/\delta)}{\log_{2}(1/2\delta)}=\log2,\]
so in this case the estimate is exact.

\end{ex}



\begin{thebibliography}{10}
\bibitem{A}
Ethan Akin, \emph{The general topology of dynamical systems}, Graduate Studies
  in Mathematics, vol.~1, American Mathematical Society, Providence, RI, 1993.
  \MR{MR1219737 (94f:58041)}

\bibitem{AkinOxtoby}
\bysame, \emph{Stretching the {O}xtoby-{U}lam theorem}, Colloq. Math.
  \textbf{84/85} (2000), part 1, 83--94. \MR{MR1778842 (2001e:37005)}

\bibitem{AHK}
Ethan Akin, Mike Hurley, and Judy~A. Kennedy, \emph{Dynamics of topologically
  generic homeomorphisms}, Mem. Amer. Math. Soc. \textbf{164} (2003), no.~783. \MR{MR1980335 (2004j:37024)}

\bibitem{Aus}
Joseph Auslander, \emph{Minimal flows and their extensions}, North-Holland
  Mathematics Studies, vol. 153, North-Holland Publishing Co., Amsterdam, 1988,
  Notas de Matem\'atica [Mathematical Notes], 122. \MR{MR956049 (89m:54050)}

\bibitem{Ba}
John Banks, \emph{Regular periodic decompositions for topologically transitive
  maps}, Ergodic Theory Dynam. Systems \textbf{17} (1997), no.~3, 505--529.
  \MR{MR1452178 (98d:54074)}

\bibitem{BK}
Louis Block and James Keesling, \emph{A characterization of adding machine
  maps}, Topology Appl. \textbf{140} (2004), no.~2-3, 151--161. \MR{MR2074913
  (2005f:37032)}

\bibitem{Bl}
Alexander~M. Blokh, \emph{The ``spectral'' decomposition for one-dimensional
  maps}, Dynamics reported, Dynam. Report. Expositions Dynam. Systems (N.S.),
  vol.~4, Springer, Berlin, 1995, pp.~1--59. \MR{MR1346496 (96e:58087)}

\bibitem{BS}
Jorge Buescu and Ian Stewart, \emph{Liapunov stability and adding machines},
  Ergodic Theory Dynam. Systems \textbf{15} (1995), no.~2, 271--290.
  \MR{MR1332404 (96i:58083)}

\bibitem{CPY}
Ethan~M. Coven, Marcus Pivato, and Reem Yassawi, \emph{Prevalence of odometers
  in cellular automata}, Proc. Amer. Math. Soc. \textbf{135} (2007), no.~3,
  815--821 (electronic). \MR{MR2262877}

\bibitem{GF}
Fatemeh~Helen Ghane and Abbas Fakhari, \emph{Totally chain-transitive
  attractors of generic homeomorphisms are persistent}, Bull. Korean Math. Soc.
  \textbf{42} (2005), no.~3, 631--638. \MR{MR2162219 (2007f:37021)}

\bibitem{GS}
Martin Golubitsky and Ian Stewart, \emph{The symmetry perspective}, Progress in
  Mathematics, vol. 200, Birkh\"auser Verlag, Basel, 2002. \MR{MR1891106 (2003e:37068)}

\bibitem{HH}
Morris~W. Hirsch and Mike Hurley, \emph{Connected components of attractors and
  other stable sets}, Aequationes Math. \textbf{53} (1997), no.~3, 308--323.
  \MR{MR1444181 (98f:58131)}

\bibitem{HV}
John~C. Holladay and Richard~S. Varga, \emph{On powers of non-negative
  matrices}, Proc. Amer. Math. Soc. \textbf{9} (1958), 631--634. \MR{MR0097416
  (20 \#3885)}

\bibitem{H}
Fern~Y. Hunt, \emph{Finite precision representation of the {C}onley
  decomposition}, J. Dynam. Differential Equations \textbf{13} (2001), no.~1,
  87--105. \MR{MR1822213 (2002c:37037)}

\bibitem{Hur}
Mike Hurley, \emph{Generic homeomorphisms have no smallest attractors}, Proc.
  Amer. Math. Soc. \textbf{123} (1995), no.~4, 1277--1280. \MR{MR1273500
  (95e:58109)}

\bibitem{KMV}
W.~D. Kalies, K.~Mischaikow, and R.~C. A.~M. VanderVorst, \emph{An algorithmic
  approach to chain recurrence}, Found. Comput. Math. \textbf{5} (2005), no.~4,
  409--449. \MR{MR2189545 (2006i:37035)}

\bibitem{K}
A.~Ya. Khinchin, \emph{Continued fractions}, The University of Chicago Press,
  Chicago, Ill.-London, 1964. \MR{MR0161833 (28 \#5037)}

\bibitem{Ku}
Marcin Kulczycki, \emph{Adding machines as invariant sets for homeomorphisms of
  a disk into the plane}, Ergodic Theory Dynam. Systems \textbf{26} (2006),
  no.~3, 783--786. \MR{MR2237478}

\bibitem{LM}
Douglas Lind and Brian Marcus, \emph{An introduction to symbolic dynamics and
  coding}, Cambridge University Press, Cambridge, 1995. \MR{MR1369092
  (97a:58050)}

\bibitem{MM}
L.~Markus and Kenneth~R. Meyer, \emph{Periodic orbits and solenoids in generic
  {H}amiltonian dynamical systems}, Amer. J. Math. \textbf{102} (1980), no.~1,
  25--92. \MR{MR556887 (81g:58013)}

\bibitem{MDG}
Ian Melbourne, Michael Dellnitz, and Martin Golubitsky, \emph{The structure of
  symmetric attractors}, Arch. Rational Mech. Anal. \textbf{123} (1993), no.~1,
  75--98. \MR{MR1218687 (94m:58141)}

\bibitem{M}
Michael Misiurewicz, \emph{Remark on the definition of topological entropy},
  Dynamical systems and partial differential equations (Caracas, 1984), Univ.
  Simon Bolivar, Caracas, 1986, pp.~65--67. \MR{MR882013 (88g:58163)}

\bibitem{N}
Zbigniew Nitecki, \emph{Topological dynamics on the interval}, Ergodic theory
  and dynamical systems, II (College Park, Md., 1979/1980), Progr. Math.,
  vol.~21, Birkh\"auser Boston, Mass., 1982, pp.~1--73. \MR{MR670074
  (84g:54051)}

\bibitem{OC}
George Osipenko and Stephen Campbell, \emph{Applied symbolic dynamics:
  attractors and filtrations}, Discrete Contin. Dynam. Systems \textbf{5}
  (1999), no.~1, 43--60. \MR{MR1664473 (99j:58135)}

\bibitem{P}
Yakov~B. Pesin, \emph{Dimension theory in dynamical systems}, Chicago Lectures
  in Mathematics, University of Chicago Press, Chicago, IL, 1997. \MR{MR1489237 (99b:58003)}

\bibitem{Ra}
J.~L. Ram{\'{\i}}rez~Alfons{\'{\i}}n, \emph{The {D}iophantine {F}robenius
  problem}, Oxford Lecture Series in Mathematics and its Applications, vol.~30,
  Oxford University Press, Oxford, 2005. \MR{MR2260521}

\bibitem{R}
Clark Robinson, \emph{Dynamical systems}, Studies in Advanced Mathematics, CRC
  Press, Boca Raton, FL, 1995.
  \MR{MR1396532 (97e:58064)}

\bibitem{Y}
Run-sheng Yang, \emph{Topological {A}nosov maps of non-compact metric spaces},
  Northeast. Math. J. \textbf{17} (2001), no.~1, 120--126. \MR{MR1848409
  (2002e:54024)}


\end{thebibliography}

\end{document}